\theoremstyle{plain}
\newtheorem{theorem}{Theorem}[section]
\newtheorem{lemma}[theorem]{Lemma}
\newtheorem{prop}[theorem]{Proposition}
\newtheorem{cor}[theorem]{Corollary}
\theoremstyle{definition}
\newtheorem{definition}[theorem]{Definition}
\newtheorem{remark}[theorem]{Remark}
\newtheorem{example}[theorem]{Example}
\theoremstyle{remark}
\newcommand{\Ext}{{\rm Ext}\,}
\newcommand{\conv}{{\rm conv}\,}
\newcommand{\Li}{\mathcal{L}}
\newcommand{\K}{\mathcal{K}}
\numberwithin{equation}{section}
\begin{document}

		\title[$\rho$-symmetricity in the space of bounded linear operators]{On symmetricity of the norm derivatives orthogonality in operator spaces}
\author[Ghosh, Paul and  Sain]{Souvik Ghosh, Kallol Paul and Debmalya Sain}

\newcommand{\acr}{\newline\indent}

\address[Ghosh]{Department of Mathematics\\ Jadavpur University\\ Kolkata 700032\\ West Bengal\\ INDIA}
\email{sghosh0019@gmail.com}

\address[Paul]{Vice-Chancellor\\ University of Kalyani\\ West Bengal \\ \& Department of Mathematics\\ Jadavpur University (on lien)\\ Kolkata 700032\\ West Bengal\\ INDIA}
\email{kalloldada@gmail.com}

\address[Sain]{Department of Mathematics\\ Indian Institute of Information Technology, Raichur\\ Karnataka 584135 \\INDIA}
\email{saindebmalya@gmail.com}

	\begin{abstract}
	We investigate $\rho$-orthogonality and its local symmetry in the space of bounded linear operators. A characterization of Hilbert space operators with symmetric numerical range is established in terms of $\rho$-orthogonality. Further, we provide  characterizations of $\rho$-left and $\rho$-right symmetric operators on finite-dimensional Hilbert spaces. In the two-dimensional real case, we show that the only nonzero $\rho$-left (or $\rho$-right) symmetric operators are scalar multiples of orthogonal matrices. However, in any finite-dimensional Hilbert space of dimension greater than two, an operator is $\rho$-left (or $\rho$-right) symmetric if and only if it is the zero operator. For infinite-dimensional spaces, we show that within a large class of operators, the zero operator remains the only example of $\rho$-left and $\rho$-right symmetric operators.

	\end{abstract}
	
	\thanks{Souvik Ghosh would like to thank  CSIR, Govt. of India, for the financial support in the form of Senior Research Fellowship under the mentorship of Prof. Kallol Paul.} 
	
	\keywords{$\rho$-orthogonality; Bounded linear operators; Numerical range; Symmetric operators}
	\subjclass[2020]{Primary: 46B20; Secondary:   47L05, 47A12.}
	\maketitle
	
\section{Introduction.} 
Orthogonality plays a pivotal role in the geometric study of Banach spaces. Among the various properties of orthogonality, the asymmetric nature of some orthogonalities, particularly, Birkhoff-James orthogonality has attracted significant interest. Notably, it is well known that the symmetry of Birkhoff-James orthogonality serves to characterize inner product spaces among all Banach spaces \cite[Th. 1]{J} when the dimension of the space is more than two. A localized version of this symmetry, referred to as left and right symmetricity, was first introduced and investigated in \cite{sain}. Since then, several researchers have contributed to the deeper understanding of symmetricity in the context of Birkhoff-James orthogonality. For further insights, we refer readers to \cite{CSS,GMPS, KST, SRBB, GSP} and the comprehensive monograph \cite{MPSbook}. In a similar spirit, the concept of symmetricity in a Banach space has also been explored for another important orthogonality type induced by the norm derivatives, known as $\rho$-orthogonality. The notions of $\rho$-left and $\rho$-right symmetricity were introduced and studied in \cite{GPS}. Motivated by these developments, the present article is devoted to investigate the symmetric properties of bounded linear operators on Hilbert spaces with respect to $\rho$-orthogonality.

Letters $\mathbb{X}, \mathbb{Y}$ denote  complex Banach spaces, whereas $\mathbb{H}$ is denoted as complex Hilbert space unless otherwise stated. The real and imaginary part of a complex number $z$ is denoted by $\Re (z)$ and $\Im(z)$, respectively. $\mathbb{X}^*$ is denoted as the dual of $\mathbb{X}.$ The symbol `$\dim \mathbb{X}$' is used to denote the dimension of the space $\mathbb{X}.$  $B_\mathbb{X}$ and $S_\mathbb{X}$ denote the unit ball and the unit sphere of $\mathbb{X},$ respectively. For a nonzero element $x\in \mathbb{X},$ the set of all supporting functionals of $x$ is denoted by $J(x)$ and defined as $J(x)=\{x^* \in S_{\mathbb{X}^*}: x^*(x)=\|x\|\}.$ An element $x \in S_{\mathbb X}$ is said to be  smooth  if $J(x)$ is singleton. If every element of $S_\mathbb{X}$ is smooth then $\mathbb{X}$ is  said to be a smooth Banach space. The set of all extreme points of the unit ball of a Banach space $\mathbb{X}$ is denoted by $\Ext B_\mathbb{X}.$ Given $x, y\in \mathbb{X}$, $x$ is said to be Birkhoff-James orthogonal \cite{B, J1} to $y$ if $\|x+\lambda y\|\geq \|x\|$ for all scalar $\lambda.$ Given a subset $M\subset \mathbb{X},$ $M^{\perp}=\{y\in \mathbb{X}: x\perp_B y, \forall x\in M\}.$ $\Li(\mathbb{X}, \mathbb{Y})(\Li(\mathbb{H}))$ denotes the space of all bounded linear operators defined between $\mathbb{X}$ and $\mathbb{Y}$ (on $\mathbb{H}$). $\K(\mathbb{X}, \mathbb{Y})$ denotes the space of all compact operators between $\mathbb{X}$ and $\mathbb{Y}.$ Given any $T\in \Li(\mathbb{X}, \mathbb{Y})$, the norm attainment set is denoted by $M_T$ and is defined by $M_T=\{x\in S_\mathbb{X}: \|Tx\|=\|T\|\}.$ Let $T\in \Li(\mathbb{X})$. The numerical range of $T$ is defined as $$W(T):=\{x^*(Tx): x\in S_\mathbb{X}, x^*\in S_{\mathbb{X}^*}, x^*(x)=1\}.$$ Numerical radius of $T,$  denoted by $w(T),$ is defined as $w(T)=\sup \{|\lambda|: \lambda \in W(T)\}.$ The maximal numerical range of $T\in \Li(\mathbb{H})$ is defined as $W_0(T):=\{\lim \langle Tx_n, x_n\rangle: x_n\in S_\mathbb{X}, \|Tx_n\|\to \|T\| \}.$ If $u\in \mathbb{X}$ is nonzero, the abstract numerical range of $z\in \mathbb{X}$ with respect to $(\mathbb{X}, u)$ is the compact convex subset of $\mathbb{C}$ given by $V(\mathbb{X}, u, z):=\{\phi(z): \phi\in S_{\mathbb{X}^*}, \phi(u)=\|u\|\}$ \cite{M}.  The symbol $\eta(\mathbb{X})$ is denoted as the numerical index of $\mathbb{X}$ which is defined as $\eta(\mathbb{X}):=\inf\{w(T): T \in \Li(\mathbb{X}), \|T\|=1\}.$ 	Let us now mention the  definition of $\rho$-orthogonality introduced in \cite{Mi}.  

\begin{definition}
	Let $\mathbb{X}$ be a normed linear space and let $x, y \in \mathbb{X}.$ The norm derivatives at $x$ in the direction of $y$ are defined as:
	\begin{eqnarray*}
		\rho'_{\pm}(x, y)&=& \|x\| \lim_{t \to 0^{\pm}} \frac{\|x+ty\|-\|x\|}{t}; \\
		\rho'(x, y) &=& \frac{1}{2}(\rho'_+(x, y) + \rho'_-(x, y)).
	\end{eqnarray*}
	We say that $x$ is $\rho$-orthogonal to $y$, i.e., $x \perp_{\rho} y$ if $\rho'(x, y)=0.$ 
\end{definition}
Note that $\rho$-orthogonality is real homogeneous, i.e., for any $\alpha, \beta \in \mathbb{R},$ $x \perp_\rho y \iff \alpha x\perp_\rho \beta y.$ 
 Let us mention some of the important results regarding the functions $\rho'_+$ and $\rho'_-.$ 

\begin{lemma}\label{functional}\cite[Th. 2.4]{Woj}
	Let $\mathbb{X}$ be a normed linear space. Then for $x, y \in S_\mathbb{X},$ 
	\begin{eqnarray*}
		\rho'_+(x, y) &:=& \|x\|\sup \{\Re(f(y)): f \in Ext(J(x))\},\\
		\rho'_-(x, y) &:=&\|x\|\inf \{\Re(f(y)): f \in Ext(J(x))\}.
	\end{eqnarray*}
\end{lemma}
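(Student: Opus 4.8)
The plan is to reduce the statement to the classical formula for the one-sided Gâteaux derivatives of the norm and then to pass from the whole face $J(x)$ to its extreme points by the Krein–Milman theorem.

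\emph{Step 1 (one-sided derivatives of the norm).} Fix $x,y\in S_{\mathbb X}$, set $g(t)=\|x+ty\|$ and $\phi(t)=\bigl(g(t)-g(0)\bigr)/t$ for $t\neq 0$. Since $g$ is convex, $\phi$ is non-decreasing on $(-\infty,0)\cup(0,\infty)$, so the right-hand derivative $\tau_+(x,y):=\lim_{t\to 0^+}\phi(t)=\inf_{t>0}\phi(t)$ and the left-hand derivative $\tau_-(x,y):=\lim_{t\to 0^-}\phi(t)=\sup_{t<0}\phi(t)$ exist, and $\rho'_\pm(x,y)=\|x\|\,\tau_\pm(x,y)$. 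For the upper estimate, any $f\in J(x)$ satisfies $g(t)\ge\Re f(x+ty)=\|x\|+t\,\Re f(y)$ for $t>0$, hence $\phi(t)\ge\Re f(y)$ for all $t>0$, so $\tau_+(x,y)\ge\sup\{\Re f(y):f\in J(x)\}$. For the reverse inequality I would pick, for each $t>0$, a supporting functional $f_t\in J\!\left(\tfrac{x+ty}{\|x+ty\|}\right)$, so that $g(t)=\Re f_t(x+ty)=\Re f_t(x)+t\,\Re f_t(y)\le\|x\|+t\,\Re f_t(y)$, i.e. $\phi(t)\le\Re f_t(y)$; then, using weak$^*$-compactness of $B_{\mathbb X^*}$, pass to a subnet $t_\alpha\to 0^+$ with $f_{t_\alpha}\to f$ weak$^*$. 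Since $f_{t_\alpha}(x)=\|x+t_\alpha y\|-t_\alpha f_{t_\alpha}(y)$ and $|f_{t_\alpha}(y)|\le\|y\|=1$, we get $f(x)=\|x\|=1$ and $\|f\|=1$, so $f\in J(x)$; passing to the limit in $\phi(t_\alpha)\le\Re f_{t_\alpha}(y)$ gives $\tau_+(x,y)\le\Re f(y)$. Thus $\tau_+(x,y)=\sup\{\Re f(y):f\in J(x)\}$, and applying this with $-y$ in place of $y$ together with $\tau_-(x,y)=-\tau_+(x,-y)$ yields $\tau_-(x,y)=\inf\{\Re f(y):f\in J(x)\}$.

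\emph{Step 2 (reduction to extreme points).} The set $J(x)$ is a nonempty (by Hahn–Banach), convex, weak$^*$-closed — hence weak$^*$-compact — subset of $B_{\mathbb X^*}$, and $f\mapsto\Re f(y)$ is weak$^*$-continuous and real-affine. By the Krein–Milman theorem $J(x)$ is the weak$^*$-closed convex hull of $\Ext(J(x))$, and a weak$^*$-continuous real-affine functional on a weak$^*$-compact convex set attains both its supremum and its infimum at an extreme point; hence
\[
\sup\{\Re f(y):f\in J(x)\}=\sup\{\Re f(y):f\in\Ext(J(x))\},
\]
and likewise for the infima. Substituting these equalities into the formulas of Step 1 and multiplying by $\|x\|$ gives the stated expressions for $\rho'_+(x,y)$ and $\rho'_-(x,y)$.

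\emph{Main obstacle.} The only genuinely delicate point is the reverse inequality in Step 1 — producing a single supporting functional of $x$ realizing the value $\tau_+(x,y)$ — which needs the weak$^*$-cluster argument above together with the verification that the limit functional still lies in $J(x)$ (in particular $\|f\|=1$, where $\|x\|=1$ is used). One must also be careful, in the complex setting, to work with real parts throughout; equivalently one may view $\mathbb X$ as a real Banach space, identify its real supporting functionals at $x$ with $\{\Re f:f\in J(x)\}$, and quote the real-scalar version. The remaining ingredients (convexity of $t\mapsto\|x+ty\|$ and Krein–Milman) are routine.
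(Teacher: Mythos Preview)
Your argument is correct. Step~1 is the classical Ascoli--Mazur formula for the one-sided derivatives of the norm, and the weak$^*$-cluster argument you give is the standard way to obtain the reverse inequality; the verification that the limit functional $f$ satisfies $f(x)=\|x\|$ and $\|f\|=1$ is handled properly. Step~2 is likewise sound: $J(x)$ is a weak$^*$-compact convex (in fact exposed) face of $B_{\mathbb X^*}$, so Bauer's maximum principle (or Krein--Milman directly) lets you replace $J(x)$ by $\Ext(J(x))$ when computing the supremum and infimum of the weak$^*$-continuous affine map $f\mapsto\Re f(y)$.

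There is nothing to compare with here: the paper does not prove this lemma but simply quotes it from \cite[Th.~2.4]{Woj}. Your proof is precisely the expected one and would serve as a self-contained justification of the cited result.
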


\begin{lemma}\cite{Amir}\label{B-J}
	Let $\mathbb{X}$ be a normed linear space. Then $x \perp_{B} y$ if and only if $\rho'_{-}(x, y) \leq 0 \leq \rho'_{+}(x, y).$ 
\end{lemma}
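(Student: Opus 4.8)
The plan is to translate the whole statement into a fact about the one–sided derivatives of a convex function of one real variable. Fix $x,y\in\mathbb X$; if $x=0$ both sides of the equivalence hold trivially, so I would assume $x\neq 0$ and set $g(t):=\|x+ty\|$ for $t\in\mathbb R$. The triangle inequality gives, for $t_1,t_2\in\mathbb R$ and $\mu\in[0,1]$, the bound $g(\mu t_1+(1-\mu)t_2)\le\mu g(t_1)+(1-\mu)g(t_2)$, so $g$ is a finite convex function on $\mathbb R$; hence the one–sided derivatives
\[
g'_+(0)=\lim_{t\to 0^+}\frac{g(t)-g(0)}{t},\qquad g'_-(0)=\lim_{t\to 0^-}\frac{g(t)-g(0)}{t}
\]
exist, are finite, and satisfy $g'_-(0)\le g'_+(0)$. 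Comparing with the definition one reads off immediately that $\rho'_\pm(x,y)=\|x\|\,g'_\pm(0)$ (the only point to watch is the sign of $t$ in the left derivative). Consequently the asserted condition $\rho'_-(x,y)\le 0\le\rho'_+(x,y)$ is exactly the statement that $0\in[g'_-(0),g'_+(0)]$, i.e. that $0$ is a subgradient of the convex function $g$ at the origin.

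With this translation both implications are short. For the forward direction I would argue: if $x\perp_B y$ then $g(t)=\|x+ty\|\ge\|x\|=g(0)$ for every real $t$, so $0$ is a global minimiser of $g$; letting $t\to 0^+$ in the (nonnegative) difference quotient gives $g'_+(0)\ge 0$, and letting $t\to 0^-$ in the (nonpositive) one gives $g'_-(0)\le 0$, whence $\rho'_-(x,y)\le 0\le\rho'_+(x,y)$. For the converse I would use the subgradient inequality for convex functions with the admissible subgradient $0$: if $g'_-(0)\le 0\le g'_+(0)$, then $g(t)\ge g(0)+0\cdot t=g(0)$ for all $t\in\mathbb R$, that is $\|x+ty\|\ge\|x\|$, which is Birkhoff–James orthogonality along the line through $y$. (In a genuinely complex space $\perp_B$ quantifies over all complex scalars, while $\rho'_\pm$ only probes real perturbations; here $\perp_B$ is read along the real line, $\|x+ty\|\ge\|x\|$ for $t\in\mathbb R$, as in \cite{Amir}, and for a complex scalar $\lambda y=te^{i\theta}y$ one simply applies the above to each $e^{i\theta}y$.)

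An alternative route bypasses convexity: by James' characterisation $x\perp_B y$ iff $f(y)=0$ for some $f\in J(x)$; since $J(x)$ is weak$^*$–compact convex, the set $\{f(y):f\in J(x)\}\subset\mathbb R$ (in the real case) is an interval whose endpoints, by Krein–Milman and Lemma \ref{functional}, are exactly $\rho'_-(x,y)/\|x\|$ and $\rho'_+(x,y)/\|x\|$, so containing $0$ is precisely the claimed inequality. I expect the main (and only mild) obstacle to be the careful bookkeeping of the one–sided derivatives and sign conventions identifying $\rho'_\pm(x,y)$ with $\|x\|\,g'_\pm(0)$, together with, if one wants the complex statement verbatim, the reduction to real perturbations indicated above; beyond that, standard convex-analytic or Hahn–Banach input suffices.
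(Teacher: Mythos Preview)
The paper does not supply its own proof of this lemma; it is simply quoted with a citation to Amir's monograph \cite{Amir}, so there is nothing to compare against line by line. Your argument via the convexity of $t\mapsto\|x+ty\|$ is the standard and correct one (and is essentially how the result is derived in \cite{Amir}); the alternative route through $J(x)$ and Lemma~\ref{functional} is also valid. Your caveat about the complex case is well placed: as the paper defines it, $x\perp_B y$ quantifies over all complex scalars, while the condition $\rho'_-(x,y)\le 0\le\rho'_+(x,y)$ only probes real perturbations, so the literal equivalence is the real one; the full complex statement needs the inequality for every rotate $e^{i\theta}y$, exactly as you indicate.
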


For more on $\rho'_+, \,\rho'_-$ and related results  readers may see \cite{AST, CW, CW2, S, Woj19}. It is a well known fact from \cite{CW, CW2} that $\perp_{\rho} \subset \perp_B$ in any normed linear space $\mathbb{X}.$ Following \cite{GPS}, Given any $x \in \mathbb{X},$ we say that $x$ is \emph{$\rho$-left symmetric} (\emph{$\rho$-right symmetric})  if $x \perp_\rho y$ implies $y \perp_\rho x$ $(y\perp_\rho x \, \, \mbox{implies}\, \, x\perp_\rho y),$ for all $y \in \mathbb{X}.$ If $x$ is  both $\rho$-left and $\rho$-right symmetric then we say that $x$ is $\rho$-symmetric.

The paper is divided into two main parts. In the first, we establish necessary and sufficient conditions for $\rho$-orthogonality in $\Li(\mathbb{X}, \mathbb{Y})$ and obtain a connection between $\rho$-orthogonality and the shape of an operator’s maximal numerical range. The second part focuses on $\rho$-left and $\rho$-right symmetricity in $\Li(\mathbb{H})$. We completely characterize these symmetries on finite-dimensional Hilbert spaces, showing that in a real two-dimensional space, scalar multiple of orthogonal operators are the only nontrivial $\rho$-left (or $\rho$-right) symmetric ones. For $\dim \mathbb{H} \geq 3$, if $T \in \Li(\mathbb{H})$ attains its norm, only the zero operator is $\rho$-left symmetric. The same holds for diagonal operators regardless of norm attainment. Likewise, for $\rho$-right symmetry, the zero operator is the sole example when $3 \leq \dim \mathbb{H} < \infty$.

\section{$\rho$-orthogonality in $\Li(\mathbb{X}, \mathbb{Y})$}

We begin this section with the following proposition regarding the homogeneity of $\rho$-orthogonality. We omit the proof as it follows directly from the definition of norm derivatives. 

\begin{prop}\label{homogeneity}
	Let $x, y\in \mathbb{X}$ and $\lambda$ be a nonzero scalar. Then the following hold true:
	\begin{itemize}
		\item[(i)] If $\lambda \in \mathbb{R}$ then $\rho'(x, \lambda y)=\rho'(\lambda x, y)=\lambda \rho'(x, y).$
		\item[(ii)] If $\lambda \in \mathbb{C}$ then $ \rho'(x, \lambda y)=|\lambda|^2\rho'(\frac{1}{\lambda}x, y),$ and $ \rho'(\lambda x, y)=|\lambda|\rho'(x, \frac{1}{\lambda}y).$
		\item[(iii)] $\rho'(\lambda x, \lambda y)=|\lambda|^2\rho'(x, y),$ for any $\lambda\in \mathbb{C}$.
	\end{itemize} 
\end{prop}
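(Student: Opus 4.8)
The plan is to prove each of the three identities directly from the definition of the norm derivatives $\rho'_\pm$, reducing everything to elementary limit manipulations. Recall that $\rho'(x,y) = \tfrac12(\rho'_+(x,y) + \rho'_-(x,y))$, so it suffices in each case to understand how the one-sided derivatives transform, and then average.

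First I would handle part (i), the real-homogeneity. For $\lambda \in \mathbb{R}$ with $\lambda > 0$, substituting $y \mapsto \lambda y$ in the difference quotient $\|x + t\lambda y\| - \|x\|$ and making the change of variable $s = \lambda t$ shows that $\rho'_\pm(x,\lambda y) = \lambda \rho'_\pm(x,y)$; for $\lambda < 0$ the positive and negative one-sided limits swap, so $\rho'_+(x,\lambda y) = \lambda \rho'_-(x,y)$ and $\rho'_-(x,\lambda y) = \lambda \rho'_+(x,y)$, but after averaging one still gets $\rho'(x,\lambda y) = \lambda \rho'(x,y)$. For the first variable, $\|\lambda x + ty\| - \|\lambda x\| = |\lambda|\big(\|x + (t/\lambda)y\| - \|x\|\big)$, and the leading factor $\|x\|$ in the definition becomes $\|\lambda x\| = |\lambda|\|x\|$; combining the $|\lambda|$ factors with the sign absorbed into the change of variable $t \mapsto t/\lambda$ again yields $\rho'(\lambda x, y) = \lambda \rho'(x,y)$. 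The key observation throughout is that $\rho'$, being a symmetric average, is insensitive to the swap of one-sided limits caused by negative scalars.

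Next, for part (ii) with $\lambda \in \mathbb{C}$, the identity $\rho'(x,\lambda y) = |\lambda|^2 \rho'(\tfrac1\lambda x, y)$ should follow by writing $\|x + t\lambda y\| = |\lambda|\,\|\tfrac1\lambda x + ty\|$ and tracking the normalization factor: $\rho'_\pm(x,\lambda y) = \|x\|\lim_{t\to 0^\pm}\tfrac{|\lambda|\|\tfrac1\lambda x + ty\| - |\lambda|\|\tfrac1\lambda x\|}{t} = \|x\|\,|\lambda|\lim_{t\to0^\pm}\tfrac{\|\tfrac1\lambda x+ty\|-\|\tfrac1\lambda x\|}{t}$, and since $\|x\| = |\lambda|\,\|\tfrac1\lambda x\|$ the prefactor assembles to $|\lambda|^2\|\tfrac1\lambda x\|$, giving exactly $|\lambda|^2\rho'_\pm(\tfrac1\lambda x, y)$; averaging finishes it. The companion identity $\rho'(\lambda x, y) = |\lambda|\rho'(x, \tfrac1\lambda y)$ is analogous: write $\|\lambda x + ty\| = |\lambda|\,\|x + \tfrac{t}{\lambda}y\|$ and carry out the change of variable $s = t/\lambda$, being careful that when $\lambda$ is not real positive this reparametrization is still a bijection of a neighborhood of $0$ (and for the one-sided limits one may invoke part (i) applied with the real scalar to reorganize, but for $\rho'$ itself the symmetry again absorbs any orientation issue). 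Finally, part (iii) is immediate: apply part (ii) to get $\rho'(\lambda x, \lambda y) = |\lambda|\,\rho'(x, \tfrac{1}{\lambda}\cdot\lambda y) = |\lambda|\,\rho'(x, y)$—wait, that gives $|\lambda|$, so instead chain the two halves of (ii): $\rho'(\lambda x,\lambda y) = |\lambda|^2\rho'(\tfrac1\lambda(\lambda x), y) = |\lambda|^2\rho'(x,y)$, using the first identity of (ii) with the vector $\lambda x$ in place of $x$.

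The only genuinely delicate point is the bookkeeping for negative and complex scalars: one must be careful that the substitution in the difference quotient reverses the direction of the limit exactly when $\Re(\cdot)$ or the real part of the scaling is negative, and verify that after forming the symmetric average $\tfrac12(\rho'_+ + \rho'_-)$ this reversal is harmless. Since the paper explicitly says the proof is omitted because it "follows directly from the definition of norm derivatives," I expect no deeper obstacle — the content is entirely the careful substitution and the observation that $\rho'$ (unlike $\rho'_\pm$ individually) behaves well under all these reparametrizations.
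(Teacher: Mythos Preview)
Your approach---working directly from the definition of $\rho'_\pm$ and averaging---is exactly what the paper intends (it explicitly omits the proof as following ``directly from the definition of norm derivatives''). Parts (i), the first identity of (ii), and (iii) are handled correctly.

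There is one genuine issue, however, in your treatment of the second identity in (ii). You propose to write $\|\lambda x + ty\| = |\lambda|\,\|x + \tfrac{t}{\lambda}y\|$ and then make ``the change of variable $s = t/\lambda$''. When $\lambda$ is genuinely complex this substitution is not meaningful: $t$ is real, so $t/\lambda$ does not range over a real neighbourhood of $0$, and there is no one-sided limit to speak of. The correct manoeuvre is simply to observe that $\tfrac{t}{\lambda}y = t\cdot(\tfrac{1}{\lambda}y)$, so that no change of variable is needed at all---the expression is already $\|x + t(\tfrac{1}{\lambda}y)\|$ with real parameter $t$. Carrying this through gives
\[
\rho'_\pm(\lambda x, y) \;=\; |\lambda|\,\|x\|\cdot |\lambda|\lim_{t\to 0^\pm}\frac{\|x + t(\tfrac{1}{\lambda}y)\| - \|x\|}{t} \;=\; |\lambda|^2\,\rho'_\pm\!\bigl(x,\tfrac{1}{\lambda}y\bigr),
\]
with a factor $|\lambda|^2$, not $|\lambda|$. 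You in fact stumbled upon this inconsistency yourself in part (iii): your first attempt, using the second identity of (ii) as stated, produced $|\lambda|$ rather than the required $|\lambda|^2$, which you then had to work around. That discrepancy is not an error on your side---a quick check in a Hilbert space, where $\rho'(x,y)=\Re\langle x,y\rangle$, confirms that $\rho'(\lambda x,y)=|\lambda|^2\rho'(x,\tfrac{1}{\lambda}y)$ and that the printed exponent is a typo. With this correction both routes to (iii) agree, and your argument is complete.
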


	From the above proposition, we obtain that although for the real scalars, $\rho$-orthogonality is homogeneous but it is not true in case of complex scalars. On the other hand, we would like to mention that  using Proposition \ref{homogeneity}(ii) it is easy to observe that both $\rho$-left and $\rho$-right symmetric elements possess the homogeneity property. 
Let us now observe  the characterization of $\rho$-orthogonality via the abstract numerical range. 

\begin{prop}\label{ortho; rho}
	Let $T, A \in \Li(\mathbb{X}, \mathbb{Y})$. Then $T \perp_\rho A$ if and only if $$\sup \Re (\Omega(T, A))+\inf \Re (\Omega(T, A))=0,$$ where $\Omega(T, A):= \big\{\lim y_n^*(Ax_n): (x_n, y_n^*) \in S_\mathbb{X}\times S_{\mathbb{Y}^*}, \lim y_n^*(Tx_n)=\|T\|= 1\big\}.$
\end{prop}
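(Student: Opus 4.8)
The plan is to compute $\rho'(T, A)$ directly via Lemma \ref{functional}, reducing everything to a description of the extreme supporting functionals of $T$ in $\Li(\mathbb{X}, \mathbb{Y})$. Since $\rho'(T, A) = \tfrac12\big(\rho'_+(T, A) + \rho'_-(T, A)\big)$ and, by Lemma \ref{functional} (with $\|T\| = 1$), $\rho'_+(T, A) = \sup\{\Re f(A) : f \in \Ext J(T)\}$ and $\rho'_-(T, A) = \inf\{\Re f(A) : f \in \Ext J(T)\}$, the identity $T \perp_\rho A \iff \rho'_+(T, A) + \rho'_-(T, A) = 0$ becomes
\[
\sup\{\Re f(A) : f \in \Ext J(T)\} + \inf\{\Re f(A) : f \in \Ext J(T)\} = 0.
\]
So the whole proposition reduces to showing that the set $\{f(A) : f \in \Ext J(T)\}$ has the same closed convex hull — equivalently, the same supremum and infimum of real parts — as the set $\Omega(T, A)$. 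Actually it suffices to show the two sets have the same sup and inf of real part, since that is all the stated condition uses; but proving equality of closures of convex hulls is cleaner and is the natural route.

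The key step, then, is to identify $\overline{\{f(A) : f \in \Ext J(T)\}}$ (or its convex hull) with $\Omega(T, A)$. For one inclusion, I would take a functional $f \in \Ext J(T) \subset S_{\Li(\mathbb{X}, \mathbb{Y})^*}$ and approximate it in the appropriate sense by functionals of the form $g \mapsto y^*(g x)$ with $x \in S_\mathbb{X}$, $y^* \in S_{\mathbb{Y}^*}$: this is where one invokes the known description (in the spirit of the Birkhoff–James literature, e.g. as used in \cite{MPSbook}) that the extreme points of $J(T)$ are limits of such rank-one-type functionals with $y^*(Tx) \to \|T\| = 1$, so that $f(A) = \lim y_n^*(A x_n)$ lies in $\Omega(T, A)$. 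For the reverse inclusion, given a sequence $(x_n, y_n^*)$ with $y_n^*(T x_n) \to 1$, the associated functionals $g \mapsto y_n^*(gx_n)$ lie in $B_{\Li(\mathbb{X}, \mathbb{Y})^*}$ and any weak-* limit point lies in $J(T)$; passing through Milman's theorem / the Krein–Milman setup one controls the corresponding value on $A$ by extreme points of $J(T)$, which gives that $\inf$ and $\sup$ of $\Re$ over $\Omega(T, A)$ are caught between those over $\Ext J(T)$. Combining both directions yields equality of the relevant extremal values of the real part, and hence the stated equivalence.

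The main obstacle I anticipate is the rigorous identification of $\Ext J(T)$ (or at least the extremal values of $\Re f(A)$ over it) with the limiting rank-one functionals appearing in $\Omega(T, A)$; this requires a careful weak-* compactness argument and an application of Milman's converse to Krein–Milman to ensure no extreme point is "lost" and no spurious value is gained, and one must be attentive that $\Li(\mathbb{X},\mathbb{Y})$ need not be reflexive so that weak-* limits, rather than norm limits, are the right notion — which is exactly why the definition of $\Omega(T,A)$ is phrased with limits of sequences $y_n^*(Tx_n)\to\|T\|$. A secondary, more routine point is verifying that taking $\sup\Re$ and $\inf\Re$ commutes with passing to the closed convex hull, which is immediate since $\Re$ is a continuous linear functional on $\mathbb{C}$. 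Once the set-identification is in place, the proof is just the two-line computation with Lemma \ref{functional} recorded above.
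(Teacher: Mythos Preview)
Your approach is essentially the paper's: express $\rho'_\pm(T,A)$ via the supporting functionals of $T$ and then identify the resulting set of values with $\Omega(T,A)$. The paper sidesteps your anticipated ``main obstacle'' by working with all of $J(T)$ --- i.e.\ the abstract numerical range $V(\Li(\mathbb{X},\mathbb{Y}),T,A)$ --- rather than $\Ext J(T)$, and then citing \cite[Th.~2.3]{M} for the identity $\Re\,V(\Li(\mathbb{X},\mathbb{Y}),T,A)=\Re\conv\Omega(T,A)$; since $\sup\Re$ and $\inf\Re$ are unaffected by closed convex hull, the conclusion follows in one line, and the Milman/weak-$*$ approximation argument you outline would amount to reproving exactly that cited theorem.
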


\begin{proof}
	Note that $\rho'_+(T, A)= \|T\|\sup\{\Re \, (\phi(A)): \phi \in S_{\Li(\mathbb{X}, \mathbb{Y})^*}, \phi(T)=\|T\|\}.$ We note that the above set on which the supremum is taken is the  real part of the abstract numerical range of $A$  with respect to $(\Li(\mathbb{X}, \mathbb{Y}), T).$ Therefore,	we can write $\rho'_+(T, A)= \|T\|\sup \Re (V(\Li(\mathbb{X}, \mathbb{Y}), T, A)).$ From \cite[Th. 2.3]{M}, we obtain that $\Re(V(\Li(\mathbb{X}, \mathbb{Y}), T, A))= \Re(\conv \big\{ \Omega(T, A)\big\}).$  Since $\sup \Re \, (\Omega(T, A))= \sup \Re \{\conv \Omega(T, A)\},$ it follows that $\rho'_+(T, A)=\|T\| \sup \Re(\Omega(T, A)).$ Similarly, we get $\rho'_-(T, A)=\|T\|\inf\Re(\Omega(T, A)).$ This proves the desired result. 
\end{proof}

Next, we consider $\mathbb{X}$ to be a reflexive Banach space. Then for any $\mathbb{Y}$ we have the complete structure of the extreme points of the unit ball of $\K(\mathbb{X}, \mathbb{Y})$ \cite[Cor. 2.2]{Woj}. In particular,
\[\Ext B_{\K(\mathbb{X}, \mathbb{Y})^*}=\{y^*\otimes x\in \K(\mathbb{X}, \mathbb{Y})^*: x\in \Ext B_\mathbb{X}, y^*\in \Ext B_{\mathbb{Y}^*}\}.\] Using this we have the following nicer form of Proposition \ref{ortho; rho}. 
\begin{prop}
	Let $\mathbb{X}$ be a reflexive Banach space and $\mathbb{Y}$ be any normed linear space. Given any $T, A \in \K(\mathbb{X}, \mathbb{Y}),$  $T \perp_\rho A$ if and only if $$\sup \Re (\Omega(T, A))+\inf \Re (\Omega(T, A))=0,$$ where $\Omega(T, A):= \big\{ y^*(Ax): (x, y^*) \in \Ext B_\mathbb{X}\times \Ext B_{\mathbb{Y}^*},  y^*(Tx)=\|T\|\big\}.$
\end{prop}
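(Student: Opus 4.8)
The plan is to derive this as a specialization of Proposition \ref{ortho; rho}. We already know from that proposition that, for $T, A \in \K(\mathbb{X}, \mathbb{Y}) \subseteq \Li(\mathbb{X}, \mathbb{Y})$, $T \perp_\rho A$ holds if and only if $\sup \Re(\Omega_0(T,A)) + \inf \Re(\Omega_0(T,A)) = 0$, where $\Omega_0(T,A)$ is the ``sequential'' set $\big\{\lim y_n^*(Ax_n): (x_n, y_n^*) \in S_\mathbb{X}\times S_{\mathbb{Y}^*}, \lim y_n^*(Tx_n)=\|T\|=1\big\}$ appearing in Proposition \ref{ortho; rho}. (After normalizing we may assume $\|T\|=1$; the case $T=0$ is trivial since then both sides vanish.) So it suffices to show that, under the reflexivity hypothesis, the quantity $\sup \Re(\Omega_0) + \inf \Re(\Omega_0)$ is unchanged when $\Omega_0$ is replaced by the ``attained'' set $\Omega(T,A) = \big\{ y^*(Ax): (x, y^*) \in \Ext B_\mathbb{X}\times \Ext B_{\mathbb{Y}^*},\ y^*(Tx)=\|T\|\big\}$.

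First I would recall, as in the proof of Proposition \ref{ortho; rho}, that $\rho'_+(T,A) = \|T\|\sup \Re (V(\K(\mathbb{X},\mathbb{Y}), T, A))$ and $\rho'_-(T,A) = \|T\|\inf \Re (V(\K(\mathbb{X},\mathbb{Y}), T, A))$, where $V(\K(\mathbb{X},\mathbb{Y}), T, A) = \{\phi(A): \phi \in S_{\K(\mathbb{X},\mathbb{Y})^*},\ \phi(T) = \|T\|\}$ is the abstract numerical range. By Lemma \ref{functional} applied to the space $\K(\mathbb{X},\mathbb{Y})$, the supremum and infimum of $\Re(V(\K(\mathbb{X},\mathbb{Y}), T, A))$ are attained over $\Ext(J(T))$, i.e. over extreme points of the face $\{\phi \in S_{\K(\mathbb{X},\mathbb{Y})^*}: \phi(T) = \|T\|\}$. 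The key input now is that every extreme point of that face is in fact an extreme point of $B_{\K(\mathbb{X},\mathbb{Y})^*}$ (a standard fact: an extreme point of a face of a convex set is an extreme point of the whole set). Combining this with the cited description $\Ext B_{\K(\mathbb{X},\mathbb{Y})^*} = \{y^*\otimes x : x \in \Ext B_\mathbb{X},\ y^* \in \Ext B_{\mathbb{Y}^*}\}$ from \cite[Cor. 2.2]{Woj}, each such extreme functional has the form $\phi = y^*\otimes x$ with $x \in \Ext B_\mathbb{X}$, $y^* \in \Ext B_{\mathbb{Y}^*}$, and the condition $\phi(T) = \|T\|$ reads $y^*(Tx) = \|T\|$, while $\phi(A) = y^*(Ax)$. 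Thus $\sup \Re(V) = \sup \Re(\Omega(T,A))$ and $\inf \Re(V) = \inf \Re(\Omega(T,A))$, and $\rho'(T,A) = \tfrac{1}{2}\|T\|\big(\sup \Re(\Omega(T,A)) + \inf \Re(\Omega(T,A))\big)$, from which the claim follows exactly as before.

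I expect the main obstacle to be a bookkeeping subtlety rather than a deep one: one must make sure the set over which the supremum and infimum are taken in $\Omega(T,A)$ is nonempty and that it genuinely agrees with the extreme points of $J(T)$. Nonemptiness of $\{\phi \in S_{\K(\mathbb{X},\mathbb{Y})^*}: \phi(T) = \|T\|\}$ follows from Hahn–Banach, and the Krein–Milman theorem (valid here because in a dual space the face is weak-$*$ compact, using reflexivity of $\mathbb{X}$ so that $B_{\K(\mathbb{X},\mathbb{Y})^*}$ behaves well, or simply by finite-dimensional Carathéodory-type reasoning in the norming argument) guarantees extreme points exist; the identification $\Ext(J(T)) \subseteq \Ext B_{\K(\mathbb{X},\mathbb{Y})^*}$ then pins down their form. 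One should also note the harmless discrepancy that in $\Omega(T,A)$ we allow $y^* \in \Ext B_{\mathbb{Y}^*}$ with possibly $\|y^*\| < 1$; but the constraint $y^*(Tx) = \|T\|$ with $x \in B_\mathbb{X}$ forces $\|y^*\| = 1$, so no spurious points are introduced. With these points checked, the proof is a direct transcription of the argument for Proposition \ref{ortho; rho} with the extreme-point structure theorem substituted in.
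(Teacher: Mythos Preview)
Your argument is correct and is precisely the route the paper has in mind: the paper gives no separate proof for this proposition, only remarking that it is the ``nicer form'' of Proposition~\ref{ortho; rho} obtained by plugging in the description $\Ext B_{\K(\mathbb{X},\mathbb{Y})^*}=\{y^*\otimes x:\ x\in\Ext B_\mathbb{X},\ y^*\in\Ext B_{\mathbb{Y}^*}\}$, and your use of Lemma~\ref{functional} together with the fact that $\Ext J(T)\subseteq \Ext B_{\K(\mathbb{X},\mathbb{Y})^*}$ is exactly that. One small correction: your remark that ``we allow $y^*\in\Ext B_{\mathbb{Y}^*}$ with possibly $\|y^*\|<1$'' is unnecessary, since every extreme point of the closed unit ball of a normed space lies on the unit sphere; the point you really need (and implicitly use) is the reverse inclusion $\Omega(T,A)\subseteq V(\K(\mathbb{X},\mathbb{Y}),T,A)$, which follows because each $y^*\otimes x$ with $y^*(Tx)=\|T\|$ automatically belongs to $J(T)$.
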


In case of Hilbert space operators, this characterization is connected with some well-known  numerical range. We first recall the definition from \cite{Magajna}.
	\begin{definition}
	Given $T, A \in \Li(\mathbb{H}),$ the \emph{maximal numerical range of $A^*T$ relative to $T$,} denoted by $W_{T}(A^*T),$ is defined as the following set:
	\[W_{T}(A^*T):=\big\{\lim \langle A^*Tx_n, x_n\rangle : x_n \in S_\mathbb{X} \,\,\mbox{with}\, \|Tx_n\|\to \|T\|\big\}.\]
\end{definition}
We note from \cite[Lem. 2.1]{Magajna} that $W_T(A^*T)$ is a closed convex subset of $\mathbb{C}.$ We mention here that the well known Bhatia-\v Semrl Theorem can also be stated as:
\emph{ Given any $T, S \in \Li(\mathbb{H})$ $T \perp_B S$ if and only if $0 \in W_T(S^*T)$}  \cite[Lem. 2.2]{Magajna}.  In light of this statement we note the following observation regarding $\rho$-orthogonality in $\Li(\mathbb{H}).$ 

\begin{theorem}\label{cor;rho-symmetry}
	Let $T, A \in \Li(\mathbb{H}).$  Then $T\perp_\rho A$ if and only if the real part of the maximal numerical range of $A^*T$ relative to $T$ is symmetric with respect to origin.
\end{theorem}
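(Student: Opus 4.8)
The plan is to reduce the statement to the abstract characterization of $\rho$-orthogonality already available and then translate it into Hilbert space language. First, I would apply Proposition~\ref{ortho; rho} to the pair $T, A \in \Li(\mathbb{H})$, which tells us that $T \perp_\rho A$ if and only if $\sup \Re(\Omega(T,A)) + \inf \Re(\Omega(T,A)) = 0$, where $\Omega(T,A)$ is the set of limits $\lim y_n^*(Ax_n)$ over sequences $(x_n, y_n^*) \in S_\mathbb{H} \times S_{\mathbb{H}^*}$ with $\lim y_n^*(Tx_n) = \|T\|$ (after normalizing so that $\|T\| = 1$; the real-homogeneity of $\rho$-orthogonality lets us assume this without loss of generality, and the case $T = 0$ is trivial).

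The key step is to identify $\Omega(T,A)$ with the maximal numerical range $W_T(A^*T)$, at least up to taking convex hulls/closures, which is harmless since we only care about $\sup \Re$ and $\inf \Re$. Using the Riesz representation, each $y_n^* \in S_{\mathbb{H}^*}$ is of the form $\langle \cdot, z_n \rangle$ for some $z_n \in S_\mathbb{H}$, so $y_n^*(Tx_n) = \langle Tx_n, z_n \rangle$ and $y_n^*(Ax_n) = \langle Ax_n, z_n \rangle$. The constraint $\langle Tx_n, z_n \rangle \to \|T\|$ forces, by Cauchy-Schwarz, that $\|Tx_n\| \to \|T\|$ and that $z_n$ is asymptotically aligned with $Tx_n/\|Tx_n\|$; conversely, choosing $z_n = Tx_n/\|Tx_n\|$ along a norming sequence $x_n$ (with $\|Tx_n\| \to \|T\|$) realizes $\langle Ax_n, z_n \rangle = \langle Ax_n, Tx_n \rangle/\|Tx_n\| = \langle A^*Tx_n, x_n \rangle/\|Tx_n\| \to \lim \langle A^*Tx_n, x_n\rangle$ (the denominator tending to $\|T\|=1$). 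This shows $\Omega(T,A)$ and $W_T(A^*T)$ have the same closed convex hull, hence the same $\sup \Re$ and $\inf \Re$. I would spell out the two inclusions carefully: one direction uses Cauchy-Schwarz to control the deviation of $z_n$ from $Tx_n/\|Tx_n\|$ and the boundedness of $A$ to conclude the inner products differ by a vanishing amount; the other is the direct construction just described.

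Finally, once $\sup \Re(\Omega(T,A)) = \sup \Re(W_T(A^*T))$ and $\inf \Re(\Omega(T,A)) = \inf \Re(W_T(A^*T))$, the condition $\sup \Re(W_T(A^*T)) + \inf \Re(W_T(A^*T)) = 0$ is precisely the statement that the set $\Re(W_T(A^*T)) \subset \mathbb{R}$ is symmetric about the origin — since $W_T(A^*T)$ is closed and convex (by \cite[Lem. 2.1]{Magajna}), its real part is a closed bounded interval $[\alpha, \beta]$, and $\alpha + \beta = 0$ is equivalent to $[\alpha,\beta] = -[\alpha,\beta]$.

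I expect the main obstacle to be the careful justification that the constraint $\langle Tx_n, z_n\rangle \to \|T\|$ is equivalent (in the limit) to both $\|Tx_n\| \to \|T\|$ and $z_n - Tx_n/\|Tx_n\| \to 0$, so that replacing the optimizing functionals $y_n^*$ by the canonical ones associated to $Tx_n$ does not change the limiting inner products with $A$. This is a routine Cauchy-Schwarz/Hilbert space argument, but it must be done with enough care to handle the passage to subsequences and the fact that $A$ need not be compact, so one cannot extract norm-convergent subsequences of $Ax_n$; boundedness of $A$ is all that is needed, since the error term is $|\langle Ax_n, z_n - Tx_n/\|Tx_n\|\rangle| \leq \|A\|\, \|z_n - Tx_n/\|Tx_n\|\|$.
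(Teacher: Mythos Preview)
Your proposal is correct and follows essentially the same route as the paper: apply Proposition~\ref{ortho; rho}, identify $\Omega(T,A)$ with $W_T(A^*T)$, and read off symmetry of the real part from the $\sup+\inf=0$ condition using the closed convexity of $W_T(A^*T)$. In fact you are more thorough than the paper, which simply asserts the identification $\Omega(T,A)=W_T(A^*T)$ with a bare ``Observe that,'' whereas you supply the Cauchy--Schwarz argument forcing $\|Tx_n\|\to\|T\|$ and $z_n-Tx_n/\|Tx_n\|\to 0$.
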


\begin{proof}	
    From Proposition  \ref{ortho; rho} we have $T \perp_\rho A$ if and only if  $$\sup \Re(\Omega(T, A))+ \inf \Re(\Omega(T, A))=0,$$ where $\Omega(T, A)= \big\{\lim\langle Ax_n, y_n\rangle: \{x_n\}, \{y_n\} \subset S_\mathbb{H}, \lim\langle Tx_n, y_n\rangle = \|T\|\big\}.$ Observe that \[\Omega(T, A)= \big\{ \lim\langle Tx_n, Ax_n\rangle: \{x_n\} \subset S_\mathbb{H}, \lim \|Tx_n\|=\|T\|\big\}=W_T(A^*T).\] Since $W_T(A^*T)$ is a compact subset of $\mathbb{R},$ it follows that for each $\lambda \in W_T(A^*T)$ there exists $-\lambda \in W_T(A^*T).$ Therefore, $W_T(A^*T)=-W_T(A^*T).$	This proves our result.
    \end{proof}
 Using the above proposition we obtain a connection between \emph{symmetric numerical range} and $\rho$-orthogonality. We recall that for each $\theta\in [0, 2\pi),$  $L_{\theta}:=\{re^{i\theta}: r\in \mathbb{R}\}$ denotes the line passing through origin in the direction of $\theta.$ The symbol $\Pr_{\theta}(z)$ is the orthogonal projection of $z$ onto the line $L_{\theta}.$ In particular, $\Pr_{\theta}(z)=e^{i\theta}(\Re (z)\cos\theta+\Im(z)\sin\theta).$
    \begin{theorem}
    Let $A \in \Li(\mathbb{H})$. Then $W(A)$ is symmetric about the origin if and only if for each $\theta \in [0, 2\pi)$, $e^{i\theta}I\perp_\rho A.$ 
    \end{theorem}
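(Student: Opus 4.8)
The plan is to fix $\theta\in[0,2\pi)$, analyze $\rho$-orthogonality of the single operator $T=e^{i\theta}I$ against $A$, and then let $\theta$ vary. The key point is that $\|Tx\|=\|T\|=1$ for every $x\in S_{\mathbb{H}}$, so the norm-attainment constraint defining the set $\Omega(T,A)$ of Proposition \ref{ortho; rho} is automatically available. Explicitly, in the Hilbert space setting one has $\Omega(e^{i\theta}I,A)=\{\lim\langle Ax_n,y_n\rangle : x_n,y_n\in S_{\mathbb{H}},\ \lim e^{i\theta}\langle x_n,y_n\rangle=1\}$, and the constraint $\langle x_n,y_n\rangle\to e^{-i\theta}$ together with the equality case of Cauchy--Schwarz (strict convexity of the Hilbert ball) forces $\|y_n-e^{i\theta}x_n\|\to 0$; hence $\langle Ax_n,y_n\rangle$ and $\langle Ax_n,e^{i\theta}x_n\rangle=e^{-i\theta}\langle Ax_n,x_n\rangle$ have the same limit, and therefore $\Omega(e^{i\theta}I,A)=e^{-i\theta}\overline{W(A)}$. (Equivalently, one may route this through Theorem \ref{cor;rho-symmetry} by computing $W_T(A^*T)$ for $T=e^{i\theta}I$.)

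By Proposition \ref{ortho; rho}, $e^{i\theta}I\perp_\rho A$ if and only if $\sup\Re\big(e^{-i\theta}\overline{W(A)}\big)+\inf\Re\big(e^{-i\theta}\overline{W(A)}\big)=0$. Since $W(A)$ is convex by Toeplitz--Hausdorff, $\overline{W(A)}$ is a compact convex subset of $\mathbb{C}$ and $\{\Re(e^{-i\theta}\lambda):\lambda\in\overline{W(A)}\}$ is a closed bounded interval; for an interval, $\sup+\inf=0$ is exactly central symmetry about $0$. As $\Re(e^{-i\theta}\lambda)$ is precisely the signed coordinate of the orthogonal projection $\Pr_\theta(\lambda)$ onto $L_\theta$, this gives: $e^{i\theta}I\perp_\rho A$ if and only if $\Pr_\theta(\overline{W(A)})$ is symmetric about the origin.

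It then remains to prove that $\Pr_\theta(\overline{W(A)})$ is symmetric about the origin for every $\theta$ if and only if $\overline{W(A)}$ (equivalently $W(A)$) is symmetric about the origin. The forward direction is immediate since each $\Pr_\theta$ is $\mathbb{R}$-linear and commutes with $\lambda\mapsto-\lambda$. For the converse, let $h$ be the support function of the compact convex set $K=\overline{W(A)}$; symmetry of $\Pr_\theta(K)$ about the origin is exactly the identity $h(\theta)=h(\theta+\pi)$, and if this holds for all $\theta$ then $h$ equals the support function of $-K$, so $K=-K$ because a compact convex set is determined by its support function. Combining this with the previous paragraph yields the stated equivalence.

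The main obstacle is the first step: correctly identifying $\Omega(e^{i\theta}I,A)$, that is, verifying not only that the constraint $\lim\langle Tx_n,y_n\rangle=\|T\|$ is vacuously satisfiable (because $\|Tx\|\equiv\|T\|$), but also that it pins $y_n$ down to be asymptotically $e^{i\theta}x_n$, so that the $\Omega$-set collapses to $e^{-i\theta}\overline{W(A)}$. After this reduction, the remaining content is the classical support-function characterization of centrally symmetric compact convex planar sets, together with the elementary observation that a convex subset of $\mathbb{R}$ with $\sup+\inf=0$ is symmetric about the origin.
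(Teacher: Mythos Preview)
Your argument is correct and lands on the same key equivalence as the paper---namely that $e^{i\theta}I\perp_\rho A$ holds precisely when $\sup\Re\big(e^{-i\theta}W(A)\big)+\inf\Re\big(e^{-i\theta}W(A)\big)=0$---but the routing is somewhat different. The paper first invokes the homogeneity relation of Proposition~\ref{homogeneity}(ii) to trade $e^{i\theta}I\perp_\rho A$ for $I\perp_\rho e^{-i\theta}A$, and then reads off the $\sup+\inf=0$ condition from Theorem~\ref{cor;rho-symmetry} applied with $T=I$; you instead keep $T=e^{i\theta}I$ and compute $\Omega(T,A)$ directly from Proposition~\ref{ortho; rho}, using the equality case of Cauchy--Schwarz to force $y_n-e^{i\theta}x_n\to 0$ and hence identify $\Omega(e^{i\theta}I,A)$ with $e^{-i\theta}\overline{W(A)}$. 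For the convex-geometry half, the paper works with the explicit projection identity $\Pr_\theta(\lambda)=e^{i\theta}\Re(e^{-i\theta}\lambda)$ and argues pointwise that each $\lambda\in W(A)$ has $-\lambda\in W(A)$, while your support-function argument ($h_K(\theta)=h_K(\theta+\pi)$ for all $\theta$ forces $K=-K$) is the standard, and arguably cleaner, way to recover central symmetry of a compact convex set from symmetry of all its one-dimensional shadows. Both approaches tacitly pass between $W(A)$ and $\overline{W(A)}$ in the infinite-dimensional setting; your parenthetical ``equivalently $W(A)$'' is exactly the step the paper also leaves implicit.
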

    
    \begin{proof}
    	Observe that $W(A)$ is symmetric about the origin if and only if  for each $\theta\in [0, 2\pi),$ $\Pr_{\theta}(W(A))$ is symmetric about the origin. To prove the necessary part, suppose that for each $\theta\in [0, 2\pi),$ $\Pr_{\theta}(W(A))$ is symmetric about the origin. We note that for any $\lambda\in W(A),$ $e^{i\theta}\Pr_0(e^{-i\theta}\lambda)=\Pr_{\theta}(\lambda).$ Indeed, given any $\mu \in \Pr_0(W(e^{-i\theta}A)),$ $\mu= \Re (e^{-i\theta}\lambda),$ for some $\lambda\in W(A).$ Then it is straightforward to see that $$e^{i\theta}\mu = e^{i\theta}\Re (e^{-i\theta}\lambda)=e^{i\theta}(\Re (\lambda)\cos\theta+\Im(\lambda)\sin\theta)= \text{Pr}_{\theta}(\lambda).$$ Thus we obtain  $\Pr_0(W(e^{-i\theta}A))$ is symmetric about the origin. In other words, $$\max\Re \langle x, e^{-i\theta}Ax\rangle+ \min\Re \langle x, e^{-i\theta}Ax\rangle=0.$$ From Theorem \ref{cor;rho-symmetry} we obtain $I\perp_\rho e^{-i\theta}A.$ Now using Proposition \ref{homogeneity} (ii), we get $e^{i\theta}I\perp_\rho A.$\\
    	To prove the sufficient part, let $\lambda\in W(A).$ Then $e^{-i\theta}\lambda\in W(e^{-i\theta}A).$ Let $\mu=\Re(e^{-i\theta}\lambda)=\Pr_0(e^{-i\theta}\lambda).$ Since $I\perp_\rho e^{-i\theta}A$, it follows that there exists $\lambda'\in W(A)$ such that $\Re(e^{-i\theta}\lambda')=\Pr_0(e^{-i\theta}\lambda')=-\mu.$ From necessary part, we already have $e^{i\theta}\Pr_0(W(e^{-i\theta}A))=\Pr_{\theta}(W(A)).$ This implies $e^{i\theta}\mu = \Pr_{\theta}(\lambda)$ and $-e^{i\theta}\mu = \Pr_{\theta}(\lambda').$ Therefore, we obtain $\lambda'=-\lambda\in W(A),$ which completes the proof. 
    \end{proof}

In the next result we provide a necessary condition for $\rho$-orthogonality 	between two operators.

	\begin{theorem}\label{th}
		Let $\mathbb{X}$ be a reflexive Banach space and let $\mathbb{Y}$ be any normed linear space. Suppose that $T, A \in \K (\mathbb{X}, \mathbb{Y})$.  Then $T \perp_\rho A$ implies that there exist $x, y \in M_T$ such that $\rho'(Tx, Ax) \geq 0$ and $\rho'(Ty, Ay)\leq 0.$\\ In particular, suppose that $M_T=D \cup (-D)$ for some connected subset $D$ of $S_\mathbb{X}$. Then $T \perp_\rho A$ implies that there exists $x_0 \in D,$  $Tx_0 \perp_\rho Ax_0.$
	\end{theorem}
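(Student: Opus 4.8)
The plan is to unwind the definition of $\rho$-orthogonality in $\K(\mathbb{X},\mathbb{Y})$ via the characterization already established, and then extract the norm-attaining points from the limiting sequences. First I would invoke Lemma~\ref{functional} to write
\[
\rho'_+(T,A) = \|T\|\sup\{\Re f(A): f\in \Ext J(T)\},\qquad
\rho'_-(T,A) = \|T\|\inf\{\Re f(A): f\in \Ext J(T)\},
\]
and use the known structure $\Ext B_{\K(\mathbb{X},\mathbb{Y})^*}=\{y^*\otimes x: x\in \Ext B_\mathbb{X},\ y^*\in \Ext B_{\mathbb{Y}^*}\}$ to say that each $f\in \Ext J(T)$ has the form $f = y^*\otimes x$ with $y^*(Tx)=\|T\|$, which forces $x\in M_T$. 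The relation $T\perp_\rho A$ says $\rho'_+(T,A)+\rho'_-(T,A)=0$, so $\rho'_+(T,A)\ge 0\ge \rho'_-(T,A)$ (they are negatives of each other, and the sup is at least the inf).

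Next I would pass from suprema/infima over $\Ext J(T)$ to actual attained points. Since $\mathbb{X}$ is reflexive and $T,A$ are compact, the relevant sets are compact enough: the set $J(T)\cap \overline{\Ext}$ — more precisely, one uses that the supremum defining $\rho'_+(T,A)$ is attained, because by \cite[Th. 2.3]{M} / Proposition~\ref{ortho; rho} it equals $\|T\|\sup\Re\Omega(T,A)$ with $\Omega(T,A)$ a compact set, and in the compact-operator/reflexive setting the limiting sequences in $\Omega(T,A)$ can be replaced by genuine norm-attaining elements (by weak compactness of $B_\mathbb{X}$ and norm-to-weak continuity arguments for compact operators). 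Concretely: pick $x\in M_T$ realizing $\rho'_+(T,A)=\|T\|\,\Re\langle \text{(functional at }Tx), Ax\rangle$ with the value $\ge 0$, hence $\rho'_+(Tx,Ax)\ge \rho'(Tx,Ax)\ge \tfrac12(\rho'_+(Tx,Ax)+\rho'_-(Tx,Ax))$... — more cleanly, one shows $\rho'(Tx,Ax)\ge 0$ by noting that the local norm derivative of the operator norm at $T$ in direction $A$, restricted to a single norm-attaining point $x$, dominates (resp. is dominated by) the corresponding norm derivative of the vector norm at $Tx$ in direction $Ax$; the sup over all $x\in M_T$ recovers $\rho'_+(T,A)$. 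So the point $x$ giving the maximum yields $\rho'(Tx,Ax)\ge 0$ and symmetrically the point $y$ giving the minimum yields $\rho'(Ty,Ay)\le 0$.

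For the ``in particular'' clause, I would use a connectedness/intermediate-value argument. When $M_T = D\cup(-D)$ with $D$ connected, define $g:D\to\mathbb{R}$ by $g(z)=\rho'(Tz,Az)$; by the homogeneity in Proposition~\ref{homogeneity}(i) (or simply evenness of the construction), $g$ takes the same sign data on $-D$, so the points $x,y$ from the first part can be taken in $D$ (replace by $-x,-y$ if needed, which does not change $\rho'(Tx,Ax)$ since $\rho'(-u,-v)=\rho'(u,v)$). Then $g$ is continuous on the connected set $D$ — here I would use that $z\mapsto Tz$ and $z\mapsto Az$ are continuous and that $\rho'(u,v)=\tfrac12(\rho'_+(u,v)+\rho'_-(u,v))$ is continuous in $(u,v)$ when $u$ ranges over smooth points, or more carefully that $\rho'$ is upper/lower semicontinuous in the right way — and since $g(x)\ge 0\ge g(y)$, the intermediate value theorem gives $x_0\in D$ with $g(x_0)=0$, i.e. $Tx_0\perp_\rho Ax_0$.

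The main obstacle I anticipate is the continuity (or semicontinuity) of $z\mapsto \rho'(Tz,Az)$ on $D$: the functions $\rho'_\pm$ are in general only one-sidedly semicontinuous, so one cannot blithely apply the intermediate value theorem to $\rho'$. The fix is either to work on the (dense, residual) set of smooth points of $\mathbb{Y}$ where $\rho'_+=\rho'_-=\rho'$ and $\rho'$ is continuous, then take a limit, or to exploit the specific structure of $D$ (e.g. that $D$ lives inside $M_T$ where extra regularity of $T$ at these points is available). The other, more routine, obstacle is justifying that the supremum/infimum over $\Ext J(T)$ is attained at a single norm-attaining $x$; this is where reflexivity of $\mathbb{X}$ together with compactness of $T$ and $A$ is used, via the compactness of $\Omega(T,A)$ noted after Proposition~\ref{ortho; rho} and a selection argument pulling a maximizing $f=y^*\otimes x\in \Ext J(T)$ out of the closed set $\Ext J(T)$.
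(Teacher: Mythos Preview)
Your approach is essentially the paper's: write $\rho'_\pm(T,A)$ as $\sup/\inf$ of $\rho'_\pm(Tx,Ax)$ over $x\in M_T$ (the paper quotes \cite[Th.~3.2]{Woj} for this, you reach it via the extreme-point description of $\K(\mathbb{X},\mathbb{Y})^*$), use reflexivity of $\mathbb{X}$ and compactness of $T,A$ to realize the sup and inf at points $x_0,y_0\in M_T$, and then run an intermediate-value argument on $z\mapsto\rho'(Tz,Az)$ for the connected case. The one inequality you should state cleanly is $\rho'_-(Tx_0,Ax_0)\ge \rho'_-(T,A)=-\rho'_+(T,A)=-\rho'_+(Tx_0,Ax_0)$, which immediately gives $\rho'(Tx_0,Ax_0)\ge 0$ (and dually for $y_0$); the paper does exactly this computation, and your concern about the continuity of $\phi(z)=\rho'(Tz,Az)$ is legitimate but is a point the paper simply asserts without further comment, so you are not missing anything relative to the published proof.
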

	
	\begin{proof}
	 Let $T \perp_\rho A.$ Then $\rho'(T, A)= \rho'_+(T, A)+ \rho'_-(T, A)=0.$ Following the expression from \cite[Th. 3.2]{Woj}, we note that 
	 \[\rho'_+(T, A)= \sup \{\rho'_+(Tx, Ax): x \in M_T \cap \Ext B_\mathbb{X}\},\]
	 \[\rho'_-(T, A)= \inf \{\rho'_-(Tx, Ax): x \in M_T \cap \Ext B_\mathbb{X}\}.\] For each $x \in M_T \cap \Ext B_\mathbb{X},$ $$\rho'_+(Tx, Ax)= \max_{z^* \in J(Tx)} \Re(z^*(Ax))= \Re(z_x^*(Ax)), \mbox{for some $z_x^*\in J(Tx)$}).$$ Thus we have $\rho'_+(T, A)= \sup \{\Re (z_x^*(Ax)): x \in M_T \cap \Ext B_\mathbb{X}\}.$ Consider a sequence $\{x_n\}_{n \in \mathbb{N}} \subset M_T \cap \Ext B_\mathbb{X}$ such that $\Re (z_{x_n}^*(Ax_n)) \longrightarrow \rho'_+(T, A).$ As $\mathbb{X}$ is reflexive and $A$ is compact, it follows that there exists $x_0 \in M_T \cap \Ext B_\mathbb{X}$ such that $\rho'_+(T, A)= \Re (z_{x_0}^*(Ax_0)),$ for some $z_{x_0}^*\in J(Tx_0).$ This implies $\rho'_+(T, A)=\rho'_+(Tx_0, Ax_0).$ Proceeding  similarly as above there exists $y_0 \in M_T \cap \Ext B_\mathbb{X}$ such that $\rho'_-(T, A)= \rho'_-(Ty_0, Ay_0).$ As $T \perp_\rho A,$ we get $\rho'_-(Ty_0, Ay_0)=-\rho'_+(Tx_0, Ax_0).$ Then one can observe that
	  \begin{eqnarray*}
	 	\rho'(Ty_0, Ay_0)&=& \frac{1}{2}\bigg(\rho'_+(Ty_0, Ay_0) + \rho'_-(Ty_0, Ay_0)\bigg)\\ &\leq& \frac{1}{2}\bigg(\rho'_+(Tx_0, Ax_0)+(-\rho'_+(Tx_0, Ax_0)\bigg)=0.
	 \end{eqnarray*}
	 Following same line of argument we can show that $\rho'(Tx_0, Ax_0)\geq 0.$

	  In particular, if $M_T= D \cup (-D)$ for some connected subset $D$ of $S_\mathbb{X},$ then the function $\phi: M_T \longrightarrow \mathbb{R} $ defined as $\phi(x)=\rho'(Tx, Ax)$ is a continuous function. By the above argument there exist $x, y \in M_T$ such that $\rho'(Tx, Ax)\geq 0$ and $\rho'(Ty, Ay)\leq 0.$ Without loss of generality we may assume that $x, y \in D.$ Since $D$ is connected, by the continuity of $\phi$ we obtain that there exist $x_0\in D$ such that $\rho'(Tx_0, Ax_0)=0.$ 
	\end{proof}
As a consequence of Theorem \ref{th} the following corollary is immediate.
	
	\begin{cor}\label{cor; unique}
		Let $T, A \in \Li(\mathbb{X}, \mathbb{Y})$ with $M_T=\{\mu x_0: |\mu|=1\}$ for some $x_0\in S_\mathbb{X}$. Then $T\perp_\rho A$ if and only if $\rho'(Tx_0, Ax_0)=0.$
	\end{cor}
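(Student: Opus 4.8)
The plan is to derive this directly from Theorem~\ref{th} together with the homogeneity/reduction machinery already in place. First I would observe that the hypothesis $M_T = \{\mu x_0 : |\mu| = 1\}$ is precisely the special case of Theorem~\ref{th} where $D$ can be taken to be a single point, or rather where $M_T$ is a circle and the relevant connected set is a singleton modulo scalars; in any case the norm attainment set consists of a single ``direction'' $x_0$. So the forward implication should be immediate: if $T \perp_\rho A$, then Theorem~\ref{th} (applied with $\mathbb{X}$ reflexive and $A$ compact, or more simply by rerunning its argument in this setting) produces $x, y \in M_T$ with $\rho'(Tx, Ax) \ge 0$ and $\rho'(Ty, Ay) \le 0$; but every element of $M_T$ is of the form $\mu x_0$, and by Proposition~\ref{homogeneity}(iii) we have $\rho'(T(\mu x_0), A(\mu x_0)) = |\mu|^2 \rho'(Tx_0, Ax_0) = \rho'(Tx_0, Ax_0)$, so the two inequalities collapse to $\rho'(Tx_0, Ax_0) \ge 0$ and $\rho'(Tx_0, Ax_0) \le 0$, forcing equality.

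For the converse, I would use the representation of the norm derivatives in terms of the norm attainment set. By \cite[Th.~3.2]{Woj} (the same expression quoted in the proof of Theorem~\ref{th}), $\rho'_+(T, A) = \sup\{\rho'_+(Tx, Ax) : x \in M_T \cap \Ext B_\mathbb{X}\}$ and similarly for $\rho'_-$. Since $M_T = \{\mu x_0 : |\mu| = 1\}$, every $x$ in this set has $Tx = \mu T x_0$ and $Ax = \mu A x_0$, and by the homogeneity identities (Proposition~\ref{homogeneity}) one checks that $\rho'_\pm(\mu T x_0, \mu A x_0) = \rho'_\pm(Tx_0, Ax_0)$ — here I should be slightly careful, since Proposition~\ref{homogeneity} is stated for $\rho'$ rather than $\rho'_\pm$; but the underlying norm-derivative definition scales the same way under multiplication of both arguments by a unimodular scalar, because $\|\mu T x_0 + t \mu A x_0\| = \|T x_0 + t A x_0\|$ for real $t$ and $\|\mu T x_0\| = \|T x_0\|$. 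Hence $\rho'_+(T,A) = \rho'_+(Tx_0, Ax_0)$ and $\rho'_-(T,A) = \rho'_-(Tx_0, Ax_0)$, so $\rho'(T,A) = \rho'(Tx_0, Ax_0) = 0$, i.e.\ $T \perp_\rho A$.

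The main subtlety — not really an obstacle but the one point needing care — is the scalar-invariance of $\rho'_\pm$ under $x_0 \mapsto \mu x_0$ with $|\mu| = 1$, since the cited homogeneity proposition addresses $\rho'$ and complex scalars in a more delicate way (Proposition~\ref{homogeneity}(ii)--(iii) show $\rho'$ is only $|\lambda|^2$-homogeneous under simultaneous complex scaling). The clean way around this is to go back to the definition: for real $t$, $\|\mu x_0 + t\mu y\| = |\mu|\,\|x_0 + ty\| = \|x_0 + ty\|$, so the difference quotients defining $\rho'_\pm(\mu x_0, \mu y)$ coincide with those for $\rho'_\pm(x_0, y)$ verbatim. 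A second minor point: the corollary is stated for arbitrary $T, A \in \Li(\mathbb{X}, \mathbb{Y})$ without reflexivity or compactness, whereas Theorem~\ref{th} assumes $\mathbb{X}$ reflexive and $T, A$ compact; but the forward direction does not actually need the existence-of-maximizers argument once $M_T$ is a single circle — the inequalities $\rho'_\pm(T,A) = \rho'_\pm(Tx_0, Ax_0)$ follow purely from the expression for $\rho'_\pm(T,A)$ as a sup/inf over $M_T \cap \Ext B_\mathbb{X}$, which holds in full generality by \cite[Th.~3.2]{Woj} (or directly, since here that set is just the unit circle times $x_0$). So both directions in fact reduce to the single identity $\rho'(T,A) = \rho'(Tx_0, Ax_0)$, and the corollary is essentially a restatement of that.
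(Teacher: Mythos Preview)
Your proposal is correct and follows essentially the same route the paper intends: the paper declares the corollary ``immediate'' from Theorem~\ref{th}, and your argument unpacks exactly that, reducing both directions to the single identity $\rho'(T,A)=\rho'(Tx_0,Ax_0)$ via the sup/inf formula over $M_T$ together with unimodular-scalar invariance of $\rho'_\pm$. Your explicit check that $\rho'_\pm(\mu Tx_0,\mu Ax_0)=\rho'_\pm(Tx_0,Ax_0)$ directly from the difference quotient (rather than from Proposition~\ref{homogeneity}, which is stated only for $\rho'$) is the right way to handle that point, and your remark about the reflexivity/compactness hypotheses of Theorem~\ref{th} versus the generality of the corollary is a fair observation that the paper itself glosses over.
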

	In the following example we see that unlike Birkhoff-James orthogonality, the above necessary condition for $T \perp_\rho A$ is not sufficient even if we consider $M_T=D \cup (-D)$, where $D$ is a connected subset of $S_\mathbb{X}.$ 
	\begin{example}
		Let us consider $T, A\in \Li(\ell_\infty^2(\mathbb{R}))$ are defined by $T (1, 1)=(1, \frac{1}{2})$ and $T(1, -1)=(1, -\frac{1}{2})$ whereas $A(1, 1)=(\frac{1}{2}, 0)$ and $A(1, -1)=(-1, 0).$ Observe that $M_T=F \cup (-F),$ where $F=\{(1, x): -1 \leq x \leq 1\}$ is a connected subset of $S_{\ell_\infty^2}.$ Note that $A(1, \frac{1}{3})=0$ and so $T(1, \frac{1}{3}) \perp_\rho A(1, \frac{1}{3}).$ On the other hand, by a straightforward computation $\max\{\rho'_+(Tx, Ax): x \in M_T\cap \Ext(B_{\ell_\infty^2})\}=\frac{1}{2}$ and $\min \{\rho'_-(Tx, Ax): x \in M_T\cap \Ext(B_{\ell_\infty^2})\}=-1.$ Hence $T \not\perp_\rho A.$  
	\end{example}

	For the sufficient part in general we have the following result.
	
	\begin{prop}\label{sufficient; ortho}
		Let $T, A \in \Li(\mathbb{X}, \mathbb{Y})$ and let $M_T \neq \emptyset.$ If $Tx \perp_\rho Ax$ for all $x \in M_T$, then $T \perp_\rho A.$ 
	\end{prop}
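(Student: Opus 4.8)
The plan is to prove the equivalent assertion $\rho'_+(T,A)=-\rho'_-(T,A)$, which by the definition of $\rho'$ is exactly $T\perp_\rho A$; we may clearly assume $T\neq 0$. First I would record the easy half. For every $x\in S_{\mathbb X}$ and every real $t$ one has $\|T+tA\|\ge\|(T+tA)x\|=\|Tx+tAx\|$, and when $x\in M_T$ also $\|Tx\|=\|T\|$, so the function $t\mapsto\|T+tA\|-\|Tx+tAx\|$ is nonnegative, vanishes at $t=0$, and (each summand being convex in $t$) has one-sided derivatives there. Hence $0$ is a global minimum of this function, so its right derivative at $0$ is $\ge 0$ and its left derivative at $0$ is $\le 0$; reading this off and using $\|T\|=\|Tx\|$ gives, for every $x\in M_T$,
\[
\rho'_+(T,A)\ \ge\ \rho'_+(Tx,Ax),\qquad \rho'_-(T,A)\ \le\ \rho'_-(Tx,Ax),
\]
and therefore $\rho'_+(T,A)\ge\sup_{x\in M_T}\rho'_+(Tx,Ax)$ and $\rho'_-(T,A)\le\inf_{x\in M_T}\rho'_-(Tx,Ax)$.

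The main work is to promote these to equalities, that is, to show $\rho'_+(T,A)$ and $\rho'_-(T,A)$ are attained on $M_T$:
\[
\rho'_+(T,A)=\sup_{x\in M_T}\rho'_+(Tx,Ax),\qquad \rho'_-(T,A)=\inf_{x\in M_T}\rho'_-(Tx,Ax).
\]
For this I would apply Lemma \ref{functional} in the space $\Li(\mathbb{X},\mathbb{Y})$, which gives $\rho'_+(T,A)=\|T\|\sup\{\Re\Phi(A):\Phi\in\Ext J(T)\}$, and combine it with the description of the extreme supporting functionals of $T$: every such $\Phi$ is (a limit of functionals) of the form $y^*\otimes x$ with $\|x\|=1$, $x\in M_T$ and $y^*\in J(Tx)$, for which $\Re\Phi(A)=\Re y^*(Ax)\le\rho'_+(Tx,Ax)/\|T\|$ by Lemma \ref{functional} applied in $\mathbb Y$; this yields the reverse inequality for $\rho'_+$, and symmetrically for $\rho'_-$. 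One can equally run this argument through Proposition \ref{ortho; rho}, writing a typical element of $\Omega(T,A)$ as $\lim y_n^*(Ax_n)$ with $\lim y_n^*(Tx_n)=\|T\|$, extracting from the sequence a point $x\in M_T$ together with $y^*\in J(Tx)$, and invoking $\Omega(Tx,Ax)\subseteq\Omega(T,A)$. This is the step I expect to be the real obstacle: the directions along which $\|T+tA\|$ is essentially attained for small $t>0$ are only \emph{approximate} norm-attainers of $T$, and turning them into honest elements of $M_T$ is precisely where one uses the compactness/reflexivity that also underlies Theorem \ref{th} (via \cite[Cor. 2.2]{Woj}).

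Granting the two equalities, the hypothesis finishes the proof at once: for each $x\in M_T$ we have $Tx\perp_\rho Ax$, i.e. $\rho'_+(Tx,Ax)=-\rho'_-(Tx,Ax)$, so
\[
\rho'_+(T,A)=\sup_{x\in M_T}\rho'_+(Tx,Ax)=-\inf_{x\in M_T}\rho'_-(Tx,Ax)=-\rho'_-(T,A),
\]
whence $\rho'(T,A)=\tfrac12\bigl(\rho'_+(T,A)+\rho'_-(T,A)\bigr)=0$, that is, $T\perp_\rho A$.
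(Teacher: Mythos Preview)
Your approach is essentially the paper's: both hinge on the identities
\[
\rho'_+(T,A)=\sup_{x\in M_T}\rho'_+(Tx,Ax),\qquad \rho'_-(T,A)=\inf_{x\in M_T}\rho'_-(Tx,Ax).
\]
The paper does not re-derive these here; it simply opens with ``As we know $\rho'_+(T,A)=\sup_{x\in M_T}\rho'_+(Tx,Ax)$'' and proceeds, treating the formula as established (it was invoked earlier, via \cite[Th.~3.2]{Woj}, in the proof of Theorem~\ref{th}). So the step you single out as ``the real obstacle'' is precisely the input the paper takes off the shelf rather than reproves; your hesitation about its validity in full generality is reasonable, but the paper does not address it either.

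Once those identities are granted, your concluding argument is actually more direct than the paper's. The paper picks a maximizing sequence $\{x_n\}\subset M_T$ for $\rho'_+$, uses $Tx_n\perp_\rho Ax_n$ to get $\rho'_-(Tx_n,Ax_n)\to -\rho'_+(T,A)$, and then argues by contradiction: if $\rho'_-(T,A)<-\rho'_+(T,A)$, choose $z\in M_T$ with $\rho'_-(Tz,Az)<-\rho'_+(T,A)$ and observe $\rho'_+(Tz,Az)=-\rho'_-(Tz,Az)>\rho'_+(T,A)$, contradicting the sup formula. Your single line
\[
\rho'_+(T,A)=\sup_{x\in M_T}\rho'_+(Tx,Ax)=\sup_{x\in M_T}\bigl(-\rho'_-(Tx,Ax)\bigr)=-\inf_{x\in M_T}\rho'_-(Tx,Ax)=-\rho'_-(T,A)
\]
reaches the same conclusion without the detour through sequences and contradiction.
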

	\begin{proof}
		As we know $\rho'_+(T, A)= \sup_{x \in M_T} \rho'_+(Tx, Ax),$ then there exists $\{x_n\} \subset M_T$ such that $\rho'_+(Tx_n, Ax_n) \longrightarrow \rho'_+(T, A).$ This implies $-\rho'_+(Tx_n, Ax_n) \longrightarrow -\rho'_+(T, A).$ Since $Tx_n \perp_\rho Ax_n,$ for each $n \in \mathbb{N},$ we have $\rho'_-(Tx_n, Ax_n) \longrightarrow -\rho'_+(T, A).$ We claim that $-\rho'_+(T, A)= \rho'_-(T, A).$ If possible let $\rho'_-(T, A) < -\rho'_+(T, A).$ This implies there exists a $z \in M_T$ such that $\rho'_-(Tz, Az)< -\rho'_+(T, A).$ As $\rho'_-(Tz, Az)=-\rho'_+(Tz, Az),$ we get $\rho'_+(Tz, Az)> \rho'_+(T, A),$ which is a contradiction. So, our claim is established. Therefore, $\rho'_+(T, A)+\rho'_-(T, A)=0,$ i.e., $T \perp_\rho A.$ This completes the proof.
	\end{proof}

	In the following example we observe that  the sufficient condition in Proposition \ref{sufficient; ortho} is not necessary.
	
	\begin{example}
		Let $T: \ell_\infty^2(\mathbb{R}) \longrightarrow \ell_\infty^2(\mathbb{R})$ and  $A: \ell_\infty^2(\mathbb{R}) \longrightarrow \ell_\infty^2(\mathbb{R})$  be defined as $T(1, 1)=(1, 0)$, $T(-1, 1)=(\frac{1}{2}, 1)$ and  $A(1, 1)=(1, 0), A(-1, 1)=(0, -1),$ respectively. Observe that $M_T\cap \Ext B_{\ell_\infty^2}=\{(\pm(1, 1), \pm (-1, 1))\}.$ Now $\rho'_+(T, A)= \max\{\rho'_+(Tx, Ax): x \in M_T \cap \Ext B_{\ell_\infty^2}\}=1,$ whereas $\rho'_-(T, A)=\min\{\rho'_+(Tx, Ax): x \in M_T \cap \Ext B_{\ell_\infty^2}\}=-1.$ This implies $T \perp_\rho A.$ But note that $T(1, 1) \not\perp_\rho A(1, 1)$.
	\end{example}

	


\section{$\rho$-symmetricity in $\Li(\mathbb{H}).$}	
In this section our aim is to investigate the $\rho$-left and $\rho$-right symmetric operators defined on a Hilbert space.  
  \subsection{$\rho$-left symmetry:}  First we observe the following necessary condition for a  $\rho$-left symmetric operator.	
		\begin{prop}\label{prop; perp zero}
		Let $T \in \Li(\mathbb{H})$ be a nonzero $\rho$-left symmetric point. Suppose that $M_T\neq \emptyset.$ Then $Ty=0$ for all $y \in (span M_T)^\perp.$
	\end{prop}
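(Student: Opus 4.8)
The plan is to exploit the known relation $\perp_\rho \subset \perp_B$ together with the Bhatia--\v Semrl-type description of Birkhoff--James orthogonality in $\Li(\mathbb{H})$, and then promote a Birkhoff--James statement to a $\rho$-statement via Corollary \ref{cor; unique}. First I would fix $y \in (\operatorname{span} M_T)^\perp$ and, aiming for a contradiction, assume $Ty \neq 0$. The idea is to produce a rank-one operator $A$ with $T \perp_\rho A$ but $A \not\perp_\rho T$, contradicting $\rho$-left symmetry of $T$.

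To build such an $A$, I would first normalise and write $y \in S_\mathbb{H}$ with $y \perp x$ for every $x \in M_T$ (since $M_T$ spans a subspace orthogonal to $y$, and $M_T \neq \emptyset$). Consider the rank-one operator $A = \langle \cdot, y \rangle z$ for a suitable $z \in \mathbb{H}$ to be chosen, so that $A$ annihilates $\operatorname{span} M_T$; in particular $Ax = 0$ for every $x \in M_T$. Then for each $x \in M_T$ we trivially have $Tx \perp_\rho Ax$ (as $Ax = 0$), so by Proposition \ref{sufficient; ortho} we get $T \perp_\rho A$. The $\rho$-left symmetry of $T$ then forces $A \perp_\rho T$. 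Now I would compute $\rho'(A,T)$: since $A$ is rank one, $M_A = \{\mu y : |\mu|=1\}$ (up to the normalisation of $z$), so Corollary \ref{cor; unique} applies and gives $A \perp_\rho T \iff \rho'(Ay, Ty) = 0$, i.e. $\rho'(z', Ty) = 0$ where $z' = \|z\|^{-1}\cdot(\text{appropriate multiple of }z)$; concretely, in a Hilbert space $\rho' (u,v) = \Re \langle v, u\rangle$ for $u \neq 0$, so the condition becomes $\Re \langle Ty, z \rangle = 0$. But $z$ is ours to choose: taking $z = Ty$ (which is nonzero by assumption) makes $\Re\langle Ty, z\rangle = \|Ty\|^2 > 0$, a contradiction. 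Hence $Ty = 0$.

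The main point requiring care — and the step I expect to be the main obstacle — is verifying that the rank-one $A = \langle \cdot, Ty\rangle\, Ty$ (or a scalar multiple arranged to have norm $1$) indeed has norm attainment set exactly $\{\mu y : |\mu| = 1\}$, so that Corollary \ref{cor; unique} is legitimately applicable, and that the Hilbert-space identity $\rho'(u,v) = \Re\langle v, u\rangle / \|u\| \cdot \|u\|$ is used with the correct normalisation coming from Lemma \ref{functional} (in a Hilbert space $\operatorname{Ext}(J(u)) = \{ \langle \cdot, u/\|u\|\rangle \}$ is a singleton, so $\rho'_+ = \rho'_- = \rho'$ and equals $\|u\|\,\Re\langle v, u/\|u\|\rangle = \Re\langle v, u\rangle$). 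One should also double-check that $A$ annihilates all of $\operatorname{span} M_T$: this is immediate because $A x = \langle x, Ty\rangle Ty$ and $\langle x, Ty\rangle = \langle x, Ty\rangle$; since $x \in M_T \subset \operatorname{span} M_T$ and $y \perp \operatorname{span} M_T$, we need $Ty \perp \operatorname{span} M_T$ as well — but that is not automatic. A cleaner choice is $A = \langle \cdot, y\rangle\, Ty$, which does annihilate $\operatorname{span} M_T$ because $\langle x, y\rangle = 0$ for $x \in M_T$, has $M_A = \{\mu y : |\mu|=1\}$, and satisfies $Ay = Ty$, so $A \perp_\rho T \iff \rho'(Ty, Ty) = \|Ty\|^2 = 0$, forcing $Ty = 0$. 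I would present the argument with this choice of $A$.
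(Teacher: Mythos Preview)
Your proof is correct and follows essentially the same approach as the paper: both construct the rank-one operator $A$ given by $Ay=Ty$ and $A|_{y^\perp}=0$ (which is exactly your $A=\langle\,\cdot\,,y\rangle Ty$), observe that $Ax=0$ for all $x\in M_T$ so that $T\perp_\rho A$, and then note that $M_A=\{\mu y:|\mu|=1\}$ with $\rho'(Ay,Ty)=\|Ty\|^2\neq 0$, yielding $A\not\perp_\rho T$ and the desired contradiction. The only cosmetic differences are that the paper invokes Proposition~\ref{ortho; rho} rather than Proposition~\ref{sufficient; ortho} for the forward orthogonality, and computes $\Re\langle Az,Tz\rangle$ directly rather than citing Corollary~\ref{cor; unique}.
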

	
	\begin{proof}
		Suppose that $M_T = S_{H_0},$ where $H_0$ is a subspace of $\mathbb{H}$  \cite[Th. 2.2]{SP}. Firstly, note that if $H_0=\mathbb{H}$ then $H_0^{\perp}=0$ and thus we have nothing to show. Suppose on the contrary assume that  there exists $z \in H_0^\perp$ such that $Tz \neq 0.$ Define $A \in \Li(\mathbb{H})$ as $Az = Tz$ and $Aw=0,$ for all $w \in z^\perp.$ Note that $H_0 \subset z^\perp.$  So, $Ax=0, $ for all $x \in H_0$ and therefore, $\langle Tx, Ax \rangle =0,$ for all $x \in M_T.$ Clearly, then using Proposition \ref{ortho; rho} we get $T \perp_\rho A.$ On the other hand, we note that $M_A = \{\mu z: |\mu|=1\}.$ Thus we get $\Re (\langle Az, Tz \rangle) = \|Tz\|^2 \neq 0.$ This yields $A \not\perp_\rho T,$ which contradicts the fact that $T$ is $\rho$-left symmetric.
	\end{proof}

	

	The following lemma shows that the $\rho$-orthogonality in $\Li(\mathbb{H})$ is invariant under unitary equivalence.
	
	\begin{lemma}\label{lem}
		Let $T, A \in \mathcal{L}(\mathbb{H}).$ For any unitary $U \in \mathcal{L}(\mathbb{H}),$ we have 
		\[ U^*TU \perp_\rho U^*AU \iff T \perp_\rho A.\]
	\end{lemma}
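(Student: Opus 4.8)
The plan is to reduce $\rho$-orthogonality to a statement about norm derivatives and then exploit the fact that conjugation by a unitary $U$ preserves operator norms. The key observation is that $X \mapsto U^*XU$ is a linear isometry of $\Li(\mathbb{H})$ onto itself: indeed, $\|U^*XU\| = \|X\|$ for every $X$ since $U$ and $U^*$ are isometries. Once we have a surjective linear isometry $\Phi$ of a normed space, it is immediate from the definition of the one-sided norm derivatives that $\rho'_{\pm}(\Phi x, \Phi y) = \rho'_{\pm}(x, y)$, because
\[
\frac{\|\Phi x + t\,\Phi y\| - \|\Phi x\|}{t} = \frac{\|\Phi(x + ty)\| - \|\Phi x\|}{t} = \frac{\|x + ty\| - \|x\|}{t},
\]
and $\|\Phi x\| = \|x\|$, so the limiting expressions defining $\rho'_{\pm}(x,y)$ and $\rho'_{\pm}(\Phi x, \Phi y)$ coincide term by term. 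Hence $\rho'(\Phi x, \Phi y) = \rho'(x,y)$, and in particular $\Phi x \perp_\rho \Phi y \iff x \perp_\rho y$.

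Carrying this out, I would first record the isometry claim: for a unitary $U \in \Li(\mathbb{H})$, the map $\Phi(X) = U^*XU$ satisfies $\|\Phi(X)\| = \|X\|$ for all $X \in \Li(\mathbb{H})$ (use $\|U^*XU\| \le \|X\|$ and, applying the same bound to $U X U^*$ via $\Phi^{-1}$, the reverse inequality). Then I would plug $T$ and $A$ into the displayed computation above with $\Phi$ in place of a generic isometry, concluding $\rho'_{\pm}(U^*TU, U^*AU) = \rho'_{\pm}(T, A)$, so $\rho'(U^*TU, U^*AU) = \rho'(T,A)$. The equivalence $U^*TU \perp_\rho U^*AU \iff T \perp_\rho A$ is then just the definition of $\rho$-orthogonality.

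There is essentially no obstacle here; the only point requiring a word of care is that one must genuinely use that $U$ is \emph{unitary} (not merely an isometry) so that $\Phi$ is surjective and the norm is preserved on both sides — if $U$ were only an isometry one would lose $\|U^*X\| = \|X\|$ in general. So the "hard part," such as it is, amounts to noting that $X \mapsto U^*XU$ is a bijective isometry of $\Li(\mathbb{H})$, after which the norm derivatives transform trivially. One could alternatively phrase the argument through Proposition \ref{ortho; rho}, observing that $(x_n, y_n^*) \mapsto (U^*x_n, y_n^* \circ U)$ gives a bijection realizing $\Omega(U^*TU, U^*AU) = \Omega(T,A)$, but the direct norm-derivative argument is shorter and cleaner.
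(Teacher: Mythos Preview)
Your proposal is correct and follows essentially the same route as the paper: both arguments reduce to showing $\rho'_{\pm}(U^*TU, U^*AU) = \rho'_{\pm}(T, A)$ by computing the difference quotient directly and using $\|U^*XU\| = \|X\|$ for all $X \in \Li(\mathbb{H})$. Your framing via a general surjective linear isometry $\Phi$ is a mild abstraction of the same computation, and your side remark about the alternative through Proposition~\ref{ortho; rho} is a legitimate (but unnecessary) detour; the paper simply does the direct calculation.
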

	 \begin{proof}
	 	It is sufficient to show that $\rho_{\pm} (T, A) = \rho'_{\pm} (U^*TU, U^*AU).$ For that we only show $\rho'_{+}(T, A) = \rho'_+(U^*TU, U^*AU)$ as the case for $\rho'_-$ follows similarly. Without loss of generality we assume that $\|A\|=\|T\|=1.$ Note that 
	 	\begin{eqnarray*}
	 		 \rho'_+(U^*TA, U^*AU)&=& \lim_{t\to 0^+} \frac{\|U^*TU + tU^*AU\|-\|U^*TU\|}{t}\\
	 		 &=& \lim_{t\to 0^+} \frac{\|U^*(T + tA)U\|-\|U^*TU\|}{t}\\
	 		 &=& \lim_{t\to 0^+} \frac{\|T + tA\|-\|T\|}{t}\\
	 		 &=& \rho'_+(T, A).
	 	\end{eqnarray*} 
	 	This proves the lemma.
	 \end{proof}


In the next result we characterize the $\rho$-left symmetric operators on a two-dimensional real Hilbert space.	
	\begin{theorem}\label{prop}
		Let $\mathbb{H}$ be a two-dimensional real Hilbert space and let $T \in \Li(\mathbb{H}).$ Then $T$ is $\rho$-left symmetric if and only if  $T$ is a scalar multiple of isometry. 
	\end{theorem}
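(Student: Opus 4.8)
The plan is to prove both implications by exploiting the small dimension, where $\Li(\mathbb{H})$ is $4$-dimensional and every nonzero operator with $M_T$ either a line pair or all of $S_\mathbb{H}$ can be analyzed via the singular value decomposition. For the easy direction, suppose $T = cU$ with $U$ an isometry (orthogonal matrix) and $c \neq 0$; then $M_T = S_\mathbb{H}$, so for any $A$ Proposition \ref{ortho; rho} gives $T \perp_\rho A \iff \sup\Re(\Omega(T,A)) + \inf\Re(\Omega(T,A)) = 0$, where $\Omega(T,A) = \{\langle Tx, Ax\rangle : x \in S_\mathbb{H}\} = \{c\langle Ux, Ax\rangle : x \in S_\mathbb{H}\} = c\,\{\langle x, U^*Ax\rangle : x \in S_\mathbb{H}\} = c\,W(U^*A)$. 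So $T \perp_\rho A$ is equivalent to $\Re W(U^*A)$ being symmetric about the origin, i.e.\ $\operatorname{tr}$-type symmetry of the numerical range of $U^*A$ along the real axis; by Lemma \ref{lem} and unitary invariance it suffices to check the condition $A \perp_\rho T$, which since $M_A$ need not be all of $S_\mathbb{H}$ requires more care. I would instead use the characterization via $W_T(A^*T)$ from Theorem \ref{cor;rho-symmetry}: $T \perp_\rho A \iff \Re W_T(A^*T)$ is symmetric about $0$, and when $T = cU$, $W_T(A^*T) = \bar c\, W(U^*A)$ up to scaling, while $A \perp_\rho T \iff \Re W_A(T^*A)$ symmetric; the symmetry of the full numerical range $W(U^*A)$ implies symmetry of all its real projections and also controls $W_A$, giving the equivalence. (This direction is largely bookkeeping with $2\times 2$ matrices.)

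For the hard direction, assume $T$ is nonzero and $\rho$-left symmetric, and show $T$ must be a scalar multiple of an orthogonal matrix. First, by Proposition \ref{prop; perp zero}, $Ty = 0$ for all $y \in (\operatorname{span} M_T)^\perp$; in dimension two this forces either $\dim(\operatorname{span} M_T) = 2$, i.e.\ $M_T = S_\mathbb{H}$ (since by \cite[Th. 2.2]{SP} $M_T = S_{H_0}$ for a subspace $H_0$, and if $H_0$ is a line then $T$ kills its orthogonal complement, making $T$ rank one — a case I must rule out), or $T = 0$. So the main work is: (a) rule out rank-one $T$, and (b) show that if $M_T = S_\mathbb{H}$ then all singular values of $T$ coincide, i.e.\ $T = cU$. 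By unitary equivalence (Lemma \ref{lem}) I may assume $T = \operatorname{diag}(a, b)$ with $a \geq b > 0$ (the rank-one case is $b = 0$).

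To rule out the rank-one case and the case $a > b > 0$, I would construct an explicit $A$ with $T \perp_\rho A$ but $A \not\perp_\rho T$. When $a > b$, $M_T = \{\pm e_1\}$, so by Corollary \ref{cor; unique} $T \perp_\rho A \iff \langle Te_1, Ae_1\rangle$ has real part zero (note $\rho'(Te_1, Ae_1) = \Re\langle Te_1, Ae_1\rangle$ in a Hilbert space), i.e.\ $\Re\overline{(Ae_1)_1}\,a = 0$. Choosing $A e_1 = e_2$, $A e_2 = 0$: then $T \perp_\rho A$ since $\langle Te_1, Ae_1\rangle = \langle a e_1, e_2\rangle = 0$; but $A$ has operator norm $1$ with $M_A = \{\pm e_1\}$, so $A \perp_\rho T \iff \Re\langle Ae_1, Te_1\rangle = 0$, and $\langle Ae_1, Te_1\rangle = \langle e_2, a e_1\rangle = 0$ — that also vanishes, so this particular $A$ does not do the job. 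I would instead pick $A$ cleverly, e.g.\ $Ae_1 = \alpha e_1 + e_2$ with $\Re\alpha = 0$ chosen so that $\langle T e_1, A e_1 \rangle = \bar\alpha a$ is purely imaginary (automatically, in the real case $\alpha = 0$) — in the real two-dimensional setting everything is real, so the construction must force an asymmetry through the structure of $M_A$ versus $M_T$: take $A = \operatorname{diag}(0,1)$, then $Te_1 = ae_1 \perp_\rho Ae_1 = 0$, so $T \perp_\rho A$; but $M_A = \{\pm e_2\}$ and $\langle Ae_2, Te_2\rangle = \langle e_2, be_2\rangle = b \neq 0$ when $b > 0$, so $A \not\perp_\rho T$, contradiction. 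This kills $a > b > 0$. For the rank-one case $b = 0$, $T = \operatorname{diag}(a,0)$: pick $A = \operatorname{diag}(0,1)$ again; $Te_1 = ae_1 \perp_\rho 0 = Ae_1$ so $T\perp_\rho A$, while $M_A = \{\pm e_2\}$ and $\langle Ae_2, Te_2\rangle = \langle e_2, 0\rangle = 0$ — so I need yet another $A$ for the rank-one case, say $A$ with $Ae_1 = e_2$, $Ae_2 = e_1$ (a rotation): then $\langle Te_1, Ae_1\rangle = \langle ae_1, e_2\rangle = 0$ so $T \perp_\rho A$; and $M_A = S_\mathbb{H}$, so $A \perp_\rho T \iff \Re W(A^*T)$ symmetric, but $A^*T = \operatorname{diag}(0, \cdot)$ composed appropriately gives $W(A^*T)$ an interval not symmetric about $0$, contradiction. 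I expect the main obstacle to be organizing these case-by-case counterexamples uniformly; the cleanest route is probably to reduce via SVD to $T = \operatorname{diag}(a,b)$, $a \geq b \geq 0$, not both zero, and in each sub-case $a > b \geq 0$ produce an operator $A$ supported so that $T \perp_\rho A$ trivially (e.g.\ $A$ vanishing on $M_T$ or mapping into $(TM_T)^\perp$) while $M_A$ is "transverse" to $M_T$ forcing $A \not\perp_\rho T$, leaving only $a = b$, i.e.\ $T = aU$.
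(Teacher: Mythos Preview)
Your proposal has gaps in both directions.

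You label the implication ``scalar multiple of isometry $\Rightarrow$ $\rho$-left symmetric'' the easy one and defer it to ``bookkeeping with $2\times 2$ matrices.'' In fact this is the substantive direction. After reducing to $T=I$, the claim is: if $\Re W(A)$ is symmetric about $0$ (equivalently $\operatorname{tr}A=0$ in the real $2\times 2$ case), then the maximal numerical range $\Re W_0(A)$ is also symmetric about $0$. This is not automatic from convexity or from the definitions; the paper proves it by passing to the real Schur form of $A$ and splitting into two cases: a scalar multiple of a rotation (where $M_A=S_{\mathbb H}$ and the claim follows immediately), and a traceless upper-triangular $B=\begin{pmatrix}\lambda & a\\0 & -\lambda\end{pmatrix}$, where one invokes \cite[Prop.~3]{Hamed} to get $\langle Bx,x\rangle=0$ for all $x\in M_B$. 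Your sketch (``the symmetry of the full numerical range \ldots\ also controls $W_A$'') does not supply this argument.

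For the necessary direction, your strategy via SVD and explicit counterexamples is different from the paper's and can be made to work, but as written it is incomplete. First, once you invoke Proposition~\ref{prop; perp zero} the case $a>b>0$ is already excluded (it forces $b=0$), so that paragraph is redundant. Second, your attempted constructions for the rank-one case $T=\operatorname{diag}(1,0)$ all fail: for $A=\operatorname{diag}(0,1)$ one has $\langle Ae_2,Te_2\rangle=0$, and for the swap $A=\begin{pmatrix}0&1\\1&0\end{pmatrix}$ one has $M_A=S_{\mathbb H}$ and $W_A(T^*A)=\{x_1x_2:\|x\|=1\}=[-\tfrac12,\tfrac12]$, which \emph{is} symmetric, contrary to your assertion. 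A choice that does work is $A=\begin{pmatrix}0&1\\1&1\end{pmatrix}$: then $T\perp_\rho A$ since $(Ae_1)_1=0$, while $M_A=\{\pm x_0\}$ with both coordinates of $x_0$ nonzero, so $\langle T^*Ax_0,x_0\rangle=(x_0)_1(x_0)_2\neq 0$ and $A\not\perp_\rho T$. The paper bypasses all of this: when $T$ is not a scalar multiple of an isometry, $M_T$ is a single pair, so $T$ is a smooth point of $\Li(\mathbb H)$; smoothness makes $\perp_B$ and $\perp_\rho$ coincide in the first argument, whence $\rho$-left symmetry of $T$ implies Birkhoff--James left symmetry, and \cite[Th.~2.1]{SGP17} then forces $T=0$.
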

	 
	 \begin{proof}
	 	We first prove the sufficient part. For this we show that the identity operator is $\rho$-left symmetric. Let $I \perp_\rho A,$ for some $A \in \mathcal{L}(\mathbb{H}).$ According to Schur's theorem we obtain an orthogonal equivalent matrix $B \in \Li(\mathbb{H})$ such that it has one of the following forms:
	\begin{eqnarray*}
	 		(i) \, B= U^t AU=
	 			\begin{bmatrix}
	 				\lambda_1 & a \\
	 				0 & \lambda_2 
	 			\end{bmatrix}
\mbox{or,}\quad 
	 				(ii)\,  B= U^tAU= 
	 			\begin{bmatrix}
	 				p & q\\
	 				-q & p
	 			\end{bmatrix}
	 		 \end{eqnarray*} 
	 		 	 for some orthogonal matrix $U \in \Li(\mathbb{H}).$ From Lemma \ref{lem} we have $I \perp_\rho B.$ This implies that \begin{equation}
	 		 	 	\max_{x \in S_{\mathbb{R}^2}} \langle x, Bx \rangle + \min_{x \in S_{\mathbb{R}^2}}\langle x, Bx \rangle =0.
	 		 	 	\end{equation}
	 		 	  Suppose that $B$ is of the form $(i).$   In other words, we have $W(B) = -W(B).$  Now observe that $$ C=B - \frac{\lambda_1+\lambda_2}{2}I = \begin{bmatrix} 
	 \frac{\lambda_1-\lambda_2}{2} & a\\
	 0 & \frac{\lambda_2 - \lambda_1}{2} 
	 \end{bmatrix}. $$ By some straightforward computation one can see that $W(C)= - W(C).$ Therefore, $W(B)-\frac{\lambda_1+\lambda_2}{2}= -W(B)+ \frac{\lambda_1+\lambda_2}{2}= W(B)+ \frac{\lambda_1+\lambda_2}{2}.$ Thus we obtain $\lambda_1 = -\lambda_2.$  Assume that $B= \begin{bmatrix}
	 \lambda & a\\
	 0 & -\lambda
	 \end{bmatrix}.$ If $a=0,$ then $M_B=S_{\mathbb{R}^2}$, which gives $B \perp_\rho I$ using (3.1). Suppose that $a \neq 0.$  Then from \cite[Prop. 3]{Hamed}, we note that $\langle Bx, x \rangle=0,$ for all $x \in M_B.$ Thus we get $B \perp_\rho I.$ On the other hand, whenever $B$ has the the form $(ii)$, we note that $B$ is an isometry. Thus $M_B=S_{\mathbb{R}^2}$ and therefore from (3.1), $\max_{x\in S_{\mathbb{R}^2}}\langle Bx, x \rangle+\min_{x \in S_{\mathbb{R}^2}}\langle Bx, x \rangle=0,$ i.e., $B \perp_\rho I.$  Therefore, we have $A \perp_\rho I.$ This implies that $I$ is $\rho$-left symmetric.  Consequently, this proves that any isometry in $M_2$ is $\rho$-left symmetric.
	 
	 To prove the necessary part, suppose on the contrary that $T$ is not an isometry. Then $M_T$ is singleton (up to the sign). This implies that $T$ is smooth. Let us consider $T \perp_B A$ for some $A \in M_2(\mathbb{R}).$ Since $T$ is smooth, we have $T\perp_\rho A.$  From hypothesis we have $T$ is $\rho$-left symmetric and therefore, $A \perp_\rho T$ which implies $A \perp_B T.$ This shows that $T$ is left symmetric with respect to Birkhoff-James orthogonality. Now applying \cite[Th. 2.1]{SGP17} we get $T=0$, a contradiction. This completes the proof.
	\end{proof}
	
	For Hilbert space, the identity operator, $I\in \Li(\mathbb{H}),$ is $\rho$-left symmetric within the class of self-adjoint operators. 
	 
	 \begin{prop}\label{self adjoint}
	 	Let $A \in \Li(\mathbb{H})$ be a self-adjoint operator. Then $I \perp_\rho A \implies A \perp_\rho I.$  
	 \end{prop}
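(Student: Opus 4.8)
The plan is to reduce everything to a statement about numerical ranges via Theorem \ref{cor;rho-symmetry}, and then exploit self-adjointness to convert the hypothesis into a symmetry condition that is automatically preserved under the roles of $I$ and $A$ being swapped. By Theorem \ref{cor;rho-symmetry}, $I \perp_\rho A$ is equivalent to saying that $\Re(W_I(A^*I))$ is symmetric about the origin. Since $A$ is self-adjoint, $A^*I = A$, and the norm-attainment condition $\|Ix_n\| \to \|I\| = 1$ is satisfied by \emph{every} sequence of unit vectors; hence $W_I(A^*I) = \overline{W(A)}$, the closure of the numerical range of $A$, which in fact equals $\overline{\{\langle Ax, x\rangle : x \in S_\mathbb{H}\}} \subset \mathbb{R}$ because $A$ is self-adjoint. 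So the hypothesis $I \perp_\rho A$ says precisely that $\overline{W(A)}$ (a compact interval $[m,M] \subset \mathbb{R}$) is symmetric about $0$, i.e., $M = -m$, equivalently $\sup\|A\| $-type condition $\max \langle Ax,x\rangle + \min\langle Ax,x\rangle = 0$.

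Next I would analyze $A \perp_\rho I$ using Theorem \ref{cor;rho-symmetry} again: this is equivalent to $\Re(W_A(I^*A))$ being symmetric about the origin, i.e., $\Re(W_A(A))$ symmetric about $0$, where now the defining sequences must satisfy $\|Ax_n\| \to \|A\|$. So I need to show: $\{\lim \langle Ax_n, x_n\rangle : x_n \in S_\mathbb{H},\ \|Ax_n\| \to \|A\|\}$ is a subset of $\mathbb{R}$ (immediate from self-adjointness) that is symmetric about $0$. Here is where the hypothesis enters. Since $A$ is self-adjoint with $\overline{W(A)} = [-M, M]$ where $M = \|A\|$ (using that for self-adjoint operators the numerical radius equals the norm, and combined with the symmetry $[m,M]$ with $m = -M$), the norm $\|Ax_n\| \to M$ forces, by the spectral decomposition, that $x_n$ concentrates on the part of the spectrum near $\pm M$; more precisely $\langle Ax_n, x_n\rangle$ must accumulate only at points of the form $\lambda$ with $|\lambda|$ close to... actually it accumulates in $[-M,M]$ but the relevant point is that $W_A(A)$ is a closed interval (by \cite[Lem. 2.1]{Magajna}) and I must show it is symmetric.

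The cleanest route for the last step: let $\mu \in W_A(A)$, witnessed by $x_n$ with $\|Ax_n\| \to M$ and $\langle Ax_n, x_n \rangle \to \mu$. Write the spectral measure $E$ of $A$ and set $d\nu_n = d\langle E x_n, x_n\rangle$, a probability measure on $[-M,M]$ with $\int \lambda^2\, d\nu_n \to M^2$ and $\int \lambda\, d\nu_n \to \mu$. From $\int \lambda^2 d\nu_n \to M^2$ and $\lambda^2 \le M^2$ on the support, the mass of $\nu_n$ escapes to the endpoints $\{-M, M\}$: for any $\varepsilon$, $\nu_n([-M+\varepsilon, M-\varepsilon]) \to 0$. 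Passing to a subsequence, $\nu_n \rightharpoonup (1-t)\delta_{-M} + t\delta_M$ for some $t \in [0,1]$, so $\mu = (2t-1)M$. Replacing $x_n$ by a reflection that swaps the spectral projections $E(\{M\})$-heavy and $E(\{-M\})$-heavy parts (concretely, if $P$ is the projection onto the closure of $\mathrm{ran}\,E([0,M])$ and... ) — more simply, observe that since the achievable values of $\mu$ are exactly $(2t-1)M$ for $t\in[0,1]$, i.e. the whole interval $[-M,M]$, and $-\mu = (2(1-t)-1)M$ is also of this form, one can build the reflected sequence by choosing the analogous test vectors with the roles of the top and bottom spectral subspaces interchanged. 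This shows $W_A(A) = [-M,M] = -W_A(A)$, so $\Re(W_A(A))$ is symmetric about $0$, and Theorem \ref{cor;rho-symmetry} gives $A \perp_\rho I$.

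The main obstacle I anticipate is the final step: rigorously arguing that $W_A(A)$ is all of $[-M,M]$ (equivalently, symmetric), since one must verify both that the hypothesis $I \perp_\rho A$ (i.e. $m = -M$) genuinely makes both $\pm M$ eigenvalues-in-the-limit reachable, and that the reflected test sequence still satisfies the norm-attainment constraint $\|Ax_n\| \to \|A\|$ — the constraint is automatic once the mass sits at $\pm M$, but one has to be careful that $E(\{-M\})$ and $E(\{M\})$ can indeed be nonzero in the limiting sense (they need not be genuine eigenprojections, so the argument should be phrased with spectral measures near $\pm M$ rather than eigenvectors). If instead one prefers to avoid spectral-measure bookkeeping, an alternative is to invoke Magajna's description of $W_A(A)$ for self-adjoint $A$ directly, or to use the representation of $\rho'_\pm$ from Lemma \ref{functional} together with the known structure of $\Ext J(A)$ for a self-adjoint operator, but the spectral-measure computation above is the most transparent and self-contained.
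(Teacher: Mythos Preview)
Your setup is exactly the paper's: translate $I\perp_\rho A$ via Theorem~\ref{cor;rho-symmetry} into the statement that $\overline{W(A)}=[-M,M]$ with $M=\|A\|$, and then try to show that the relative maximal numerical range $W_A(A)$ has $\sup+\inf=0$. The divergence is only in the last step, and there your argument is both more complicated than necessary and contains a gap.

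The gap is the sentence ``the achievable values of $\mu$ are exactly $(2t-1)M$ for $t\in[0,1]$, i.e.\ the whole interval $[-M,M]$.'' Your spectral-measure computation only shows the inclusion $W_A(A)\subset[-M,M]$; it does not show every such value is achieved, and your ``reflection'' of the test sequence is not made precise (you would need to actually build vectors supported near $-M$ and $+M$ in the right proportions, which requires first checking that both $\pm M$ lie in $\sigma(A)$ --- true here, but you never use it). You yourself flag this as the main obstacle.

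The paper sidesteps all of this with a one-line observation: you do not need $W_A(A)$ to equal $[-M,M]$, only that $\sup W_A(A)=M$ and $\inf W_A(A)=-M$. By hypothesis there exist unit sequences $\{x_n\},\{y_n\}$ with $\langle Ax_n,x_n\rangle\to M$ and $\langle Ay_n,y_n\rangle\to -M$. Cauchy--Schwarz gives
\[
|\langle Ax_n,x_n\rangle|\le \|Ax_n\|\le \|A\|,
\]
so $\|Ax_n\|\to\|A\|$ automatically; likewise for $\{y_n\}$. Thus both sequences are norming for $A$, hence $\pm M\in W_A(A)\subset[-M,M]$, and $\sup+\inf=0$ follows immediately. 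No spectral calculus is needed, and the argument is two lines rather than a page.
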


	 \begin{proof}
	 	 Let $I \perp_\rho A.$ This implies $\sup_{x\in S_\mathbb{H}} \Re\langle x, Ax\rangle + \inf_{x\in S_\mathbb{H}} \Re\langle x, Ax \rangle=0.$ As $W(A)$ is real and $w(A)=\|A\|$,  $W(A)=[-\|A\|, \|A\|].$ Now there exists $\{x_n\} \subset S_\mathbb{H}$ such that $\lim|\langle Ax_n, x_n\rangle |=\|A\|.$ Then clearly, $\lim\|Ax_n\|=\|A\|,$ which means $\{x_n\}$ is a norming sequence of $A.$ Similarly, there exists a norming sequence $\{y_n\}$ such that $\lim\langle Ay_n, y_n \rangle=-\|A\|. $ Therefore, from Proposition \ref{ortho; rho} we obtain $A \perp_\rho I.$ 
	 \end{proof}
	 
	 From the above result we note the following remark in the Banach space setting:
	 
	 \begin{remark}
	 	If we consider a real Banach space $\mathbb{X}$ with $\eta(\mathbb{X})=1,$ then for each $A\in \Li(\mathbb{X}),$ $w(A)=\|A\|.$ Then following the argument from Proposition \ref{self adjoint} we obtain that $I \in \Li(\mathbb{X})$ is $\rho$-left symmetric.  Indeed, as $I\perp_\rho A$, we get $W(A)=[-\|A\|, \|A\|].$ This implies there exists $(x_n, x_n^*), (y_n, y_n^*)\in S_\mathbb{X}\times S_{\mathbb{X}^*}$ satisfying $x_n^*(x_n)=1$ and $y_n^*(y_n)=1$ for each $n\in \mathbb{N}$ such that $x_n^*(Ax_n)\to \|A\|$ and $y_n^*(Ay_n)\to -\|A\|.$ Taking two norming sequence $(x_n, x_n^*)$ and $(y_n, -y_n^*)$ we can conclude that $\sup \Omega(A, I)=1$ and $\inf \Omega(A, I)=-1.$ Therefore, $A\perp_\rho I.$ This shows that whenever $\eta(\mathbb{X})=1$, we get for any $A\in \Li(\mathbb{X})$,  numerical range of $A$ is symmetric about the origin implies the maximal numerical range of $A$ is also symmetric about the origin.
	 \end{remark}

In the next result we show that there does not exist any nonzero $\rho$-left symmetric norm attaining operators, provided that $\dim \mathbb{H}\geq 3$. 
\begin{theorem}\label{left; main}
	Let $\dim\mathbb{H}\geq 3$ and let $T\in \Li(\mathbb{H})$ be such that $M_T \neq \emptyset.$ Then $T$ is $\rho$-left symmetric if and only if $T=0.$ 
\end{theorem}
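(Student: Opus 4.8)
The $(\Leftarrow)$ direction is immediate, since $\rho'(0,A)=\rho'(A,0)=0$ for every $A$, so the zero operator is (vacuously) $\rho$-left symmetric. For the converse the plan is: assuming $T\neq 0$ is $\rho$-left symmetric with $M_T\neq\emptyset$, I would normalize $\|T\|=1$ and exhibit an operator $A$ with $T\perp_\rho A$ but $A\not\perp_\rho T$, contradicting $\rho$-left symmetry. First I would record the structure of $T$: by \cite[Th.~2.2]{SP} (used already in the proof of Proposition~\ref{prop; perp zero}), $M_T=S_{H_0}$ for a nonzero closed subspace $H_0$; by Proposition~\ref{prop; perp zero}, $T$ annihilates $H_0^{\perp}$; and since $\|Tx\|=\|x\|$ for $x\in H_0$, the map $T|_{H_0}$ is an isometry onto the closed subspace $R:=T(H_0)$, with $\dim R=\dim H_0$. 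The proof then splits according to whether $H_0=\mathbb H$.

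\emph{The non-surjective case $H_0\neq\mathbb H$.} Here $H_0^{\perp}\neq 0$ and (since $\dim R=\dim H_0<\dim\mathbb H$) also $R^{\perp}\neq 0$. Fixing unit vectors $x_0\in H_0$, $u_0=Tx_0\in R$, $y_0\in H_0^{\perp}$, $v\in R^{\perp}$ and a small $\delta\in(0,1/\sqrt2)$, I would set $z_0=\tfrac1{\sqrt2}(x_0+y_0)$, $z_1=\tfrac1{\sqrt2}(x_0-y_0)$, $w_0=-\delta u_0+\sqrt{1-\delta^2}\,v$, $w_1=\delta u_0+\tfrac{\delta^2}{\sqrt{1-\delta^2}}\,v$, and $A=\langle\cdot,z_0\rangle w_0+\langle\cdot,z_1\rangle w_1$. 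Routine checks give $z_0\perp z_1$, $w_0\perp w_1$, $\|w_0\|=1>\|w_1\|$, whence $M_A=\{\mu z_0:|\mu|=1\}$. For $x\in S_{H_0}$ one computes $Ax\in\operatorname{span}\{v\}\subseteq R^{\perp}$ while $Tx\in R$, so $\langle Tx,Ax\rangle=0$ and Proposition~\ref{sufficient; ortho} yields $T\perp_\rho A$. On the other hand $Az_0=w_0$ and $Tz_0=\tfrac1{\sqrt2}u_0$, so by Corollary~\ref{cor; unique}, $A\perp_\rho T$ would force $\Re\langle w_0,u_0\rangle=0$; but $\langle w_0,u_0\rangle=-\delta\neq 0$. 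Hence $A\not\perp_\rho T$, a contradiction. (Notice this case uses no hypothesis on $\dim\mathbb H$.)

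\emph{The surjective case $H_0=\mathbb H$.} Then $T$ is a unitary and $\dim\mathbb H=\dim H_0\geq 3$. I would fix orthonormal vectors $r_1,r_2,r_3$ and a small $\epsilon>0$, let $C_0$ act as $\operatorname{diag}(1,-1,1+i\epsilon)$ on $\operatorname{span}\{r_1,r_2,r_3\}$ (or, in the real case, as $\left(\begin{smallmatrix}1&\epsilon&0\\-\epsilon&0&0\\0&0&-1\end{smallmatrix}\right)$) and as $0$ on the orthogonal complement, and put $A=C_0T$. Since every unit vector norms the unitary $T$, Theorem~\ref{cor;rho-symmetry} reduces $T\perp_\rho A$ to the symmetry about $0$ of $\Re W_T(A^*T)$, which here is the interval $[-1,1]$ because $0\in W(C_0)$; so $T\perp_\rho A$. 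However $\|A\|=\|C_0\|>1$ is attained only along $T^{*}r_{v_0}$, where $r_{v_0}$ is the (simple) top singular vector of $C_0$, so $M_A=\{\mu x_1:|\mu|=1\}$ with $x_1=T^{*}r_{v_0}$; then $Tx_1=r_{v_0}$, $Ax_1=C_0r_{v_0}$, and Corollary~\ref{cor; unique} reduces $A\perp_\rho T$ to $\Re\langle C_0v_0,v_0\rangle=0$, which fails, since $\langle C_0v_0,v_0\rangle$ equals $1+i\epsilon$ in the complex case and $\tfrac1{1+\epsilon^2}$ in the real case. Again $A\not\perp_\rho T$, a contradiction, so $T=0$.

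The bulk of the work lies in the two explicit constructions: verifying the norm-attainment sets $M_A$ and evaluating $W_T(A^*T)$ and $W_A(T^*A)$; these are routine but must be done with care. The genuinely delicate point — and the reason the hypothesis $\dim\mathbb H\geq 3$ enters — is the choice of $C_0$ in the surjective case. The obvious $2\times2$ candidate $\left(\begin{smallmatrix}\lambda&a\\0&-\lambda\end{smallmatrix}\right)$ does \emph{not} work, because by \cite[Prop.~3]{Hamed} such a matrix satisfies $\langle C_0x,x\rangle=0$ at its top singular vector, so $W_A(T^*A)$ would stay symmetric about the origin; one is forced to a non-triangular three-dimensional perturbation, exploiting precisely that $\rho$-orthogonality of operators detects only the \emph{real part} of the maximal numerical range — i.e.\ the full strength of Theorem~\ref{cor;rho-symmetry} rather than the cruder sufficient condition of Proposition~\ref{sufficient; ortho}.
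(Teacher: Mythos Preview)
Your overall strategy --- normalize $\|T\|=1$, split by whether $M_T=S_{\mathbb H}$, and exhibit an $A$ with $T\perp_\rho A$ but $A\not\perp_\rho T$ --- coincides with the paper's, but the actual constructions differ. In the isometry case the paper defines $A$ directly on three orthonormal vectors of $\mathbb H$ (sending $x_{\alpha_1},x_{\alpha_2}$ to a rotated copy of $Tx_{\alpha_1},Tx_{\alpha_2}$ and $x_{\alpha_3}$ to $-\tfrac{1}{\sqrt2}Tx_{\alpha_3}$), whereas you post-compose $T$ with a fixed $3\times3$ block $C_0$; both work. For $H_0\neq\mathbb H$ the paper splits further according to whether $T(H_0)\subset H_0$, while you attempt a single rank-two construction using a vector $v\in R^\perp$.

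There is, however, a genuine gap in your non-surjective case. The step ``$\dim R=\dim H_0<\dim\mathbb H$, hence $R^\perp\neq0$'' is valid only in finite dimensions: in an infinite-dimensional $\mathbb H$ a proper closed subspace can have the same Hilbert dimension as $\mathbb H$, and $T|_{H_0}$ can be surjective onto $\mathbb H$. Concretely, take $\mathbb H=\ell^2$, $H_0=\{e_1\}^\perp$, and $T$ the backward shift ($Te_1=0$, $Te_{n+1}=e_n$); then $M_T=S_{H_0}$ and $T(H_0^\perp)=0$ as required by Proposition~\ref{prop; perp zero}, yet $R=T(H_0)=\ell^2$, so no $v\in R^\perp$ exists and your operator $A$ cannot be built. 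A parallel slip occurs in the surjective case, where you assert that an isometry on $\mathbb H$ is unitary; this too fails in infinite dimensions, but is harmless once you choose $r_1,r_2,r_3\in T(\mathbb H)$ and use that $TT^*r=r$ for $r\in T(\mathbb H)$. The missing scenario $H_0\neq\mathbb H$, $R=\mathbb H$ is exactly what the paper's sub-case $T(H_0)\not\subset H_0$ covers, via an operator supported on one vector of $H_0$ and one of $H_0^\perp$. Alternatively, you can recycle your own second construction: once $R^\perp=0$ one has $\dim H_0\geq3$, so pick $r_1,r_2,r_3\in R$ and set $A=C_0T$; since $Tx$ then sweeps all of $S_R=S_{\mathbb H}$ as $x$ ranges over $M_T=S_{H_0}$, and $M_A$ is again a single ray lying in $H_0$, the same computation yields $T\perp_\rho A$ but $A\not\perp_\rho T$.
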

\begin{proof}
Suppose on the contrary that $T$ is nonzero and $\|T\|=1$. We prove the necessary part of the theorem by considering the following two cases.\\
	\textbf{Case-I:} Suppose that $M_T=S_\mathbb{H}.$ Let $\{x_\alpha: \alpha\in \Lambda\}$ be an orthonormal basis for $\mathbb{H}.$ For some $\alpha_1, \alpha_2, \alpha_3 \in \Lambda,$ let us define $A \in \Li(\mathbb{H})$ as the following:
	\begin{eqnarray*}
	  Ax_{\alpha_1}&=& \frac{1}{\sqrt{2}}(Tx_{\alpha_1}+Tx_{\alpha_2}),\\
	  Ax_{\alpha_2}&=& \frac{1}{\sqrt{2}}(Tx_{\alpha_2}-Tx_{\alpha_1}),\\
	  Ax_{\alpha_3}&=&-\frac{1}{\sqrt{2}}Tx_{\alpha_3},\\
	  Ax_{\alpha}&=&0, \quad \mbox{for all} \,\, \alpha \in \Lambda \setminus \{\alpha_1, \alpha_2, \alpha_3\}.
	\end{eqnarray*}
	For any $z\in S_\mathbb{H},$ we can write $z=\sum c_{i} x_{\alpha_i},$ where $\sum |c_i|^2=1$. Then it is straightforward to see that $\|Az\|^2= |c_1|^2 + |c_2|^2 + \frac{1}{2}|c_3|^2 \leq 1.$ Clearly, $z\in M_A$ iff $z \in span\{x_{\alpha_1}, x_{\alpha_2}\}\cap S_\mathbb{H}.$  As $M_T=S_\mathbb{H}$, for any distinct $\alpha, \beta \in \Lambda,$ we have $\langle Tx_{\alpha}, Tx_{\beta}\rangle =0.$ Now for any $z\in S_\mathbb{H},$ one can compute that $\Re \langle Tz, Az\rangle= \frac{1}{\sqrt{2}}(|c_1|^2+ |c_2|^2- |c_3|^2). $  This shows that $\max_{z\in M_T} \Re \langle Tz, Az\rangle =\frac{1}{\sqrt{2}} $ and $\min_{z\in M_T} \Re \langle Tz, Az\rangle =-\frac{1}{\sqrt{2}}.$ Therefore, we have $\rho'(T, A)=0,$ i.e., $T\perp_\rho  A.$ One the other hand, one can observe that $\Re \{\langle Az, Tz \rangle: z\in M_A\}=\frac{1}{\sqrt{2}}.$ This implies that $\rho'(A, T)\neq 0,$ i.e., $A\not\perp_\rho T.$ This is a contradiction. Thus we obtain that isometry in $\Li(\mathbb{H})$ is not $\rho$-left symmetric.\\
	\textbf{Case-II:} Suppose that $M_T =S_{H_0}\subsetneq S_\mathbb{H},$ where $H_0$ is a proper subspace of $\mathbb{H}.$ Note that $\mathbb{H}=H_0 \oplus H_0^{\perp}.$ Moreover, as $T$ is $\rho$-left symmetric, $T(H_0^{\perp})=0.$ Let us consider $\{x_\alpha: \alpha \in \Lambda\}$ to be an orthonormal basis for $H_0$ and $\{x_\beta: \beta \in \Lambda'\}$  an orthonormal basis for $H_0^{\perp}$. Thus $\{x_\alpha, x_\beta: \alpha\in \Lambda, \beta \in \Lambda'\}$ is an orthonormal  basis for $\mathbb{H}.$ First of all observe that if $T(H_0)\subset H_0$ then $T$ is an isometry on the subspace $H_0.$ In other words, $T$ can be written as $T= \begin{pmatrix}
		T_1 & 0\\
		0 & 0
	\end{pmatrix},$ where $T|_{H_0}=T_1.$ Clearly, $T_1$ is an isometry on $H_0.$ From Case-I, we already can see that $T$ can not be $\rho$-left symmetric unless $T=0.$ So, assume that $T(H_0) \not\subset H_0.$ Then there exists $\alpha_0\in\Lambda, \beta_0 \in \Lambda'$ such that $x_{\alpha_0}\in H_0, x_{\beta_0}\in H_0^{\perp}$ satisfying $\langle Tx_{\alpha_0}, x_{\beta_0}\rangle \neq 0.$ Now we define $A\in \Li(\mathbb{H})$ as:
	\begin{eqnarray*}
		Ax_{\alpha_0}&=& w_0, \quad \mbox{where} \,\, w_0\perp Tx_{\alpha_0},\\
		Ax_{\beta_0} &=& \frac{1}{\sqrt{2}}(w_0+ Tx_{\alpha_0}),\\
		Ax_{\alpha}&=&0, \quad \forall \alpha\in \Lambda.
	\end{eqnarray*}
	From this it is a simple computation to observe that $M_A= \{\frac{\mu}{\sqrt{2}}(x_{\alpha_0}+ x_{\beta_0}): |\mu|=1\}.$ Note that for each $x_\alpha\in H_0$, $\langle Tx_\alpha, Ax_\alpha\rangle=0.$ Therefore, $\rho'(T, A)=0,$ i.e., $T\perp_\rho A$ whereas, $$\langle A(\frac{1}{\sqrt{2}}(x_{\alpha_0}+ x_{\beta_0})), T(\frac{1}{\sqrt{2}}(x_{\alpha_0}+ x_{\beta_0}))\rangle= \frac{1}{2\sqrt{2}}\|Tx_{\alpha}\|^2\neq 0$$ implying that $A \not\perp_\rho T,$ which is a contradiction to the fact that $T$ is $\rho$-left symmetric. This completes the proof of the theorem.
	\end{proof}

As every compact operator on a Hilbert space always attains its norm, from Theorem \ref{left; main}, we have the following corollary.
\begin{cor}\label{compact}
	Let $\dim\mathbb{H}\geq 3$ and let $T\in \K(\mathbb{H})$. Then $T$ is $\rho$-left symmetric if and only if $T=0.$
\end{cor}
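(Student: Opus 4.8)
The proof proposal is short because the statement is an immediate corollary of the preceding theorem, so the plan is to leverage that theorem combined with the standard compactness property of Hilbert space operators.

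\medskip

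The plan is to reduce Corollary~\ref{compact} directly to Theorem~\ref{left; main}. First I would recall the classical fact that every compact operator $T$ on a Hilbert space $\mathbb{H}$ attains its norm, i.e.\ $M_T \neq \emptyset$. This follows because the closed unit ball of $\mathbb{H}$ is weakly compact, a norming sequence $\{x_n\}$ has a weakly convergent subsequence $x_{n_k} \rightharpoonup x_0$ with $\|x_0\| \le 1$, and compactness of $T$ upgrades this to $Tx_{n_k} \to Tx_0$ in norm, whence $\|Tx_0\| = \|T\|$; if $\|T\| \neq 0$ this forces $\|x_0\| = 1$, so $x_0 \in M_T$. (If $T = 0$ the conclusion is trivial anyway.)

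\medskip

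Then I would simply observe that $\K(\mathbb{H}) \subset \Li(\mathbb{H})$, so a $\rho$-left symmetric compact operator $T$ is in particular a $\rho$-left symmetric bounded operator with $M_T \neq \emptyset$ and $\dim \mathbb{H} \ge 3$; Theorem~\ref{left; main} then yields $T = 0$. The converse is immediate since the zero operator is vacuously $\rho$-left symmetric (or one just notes $0 \perp_\rho A$ and $A \perp_\rho 0$ for every $A$, as $\rho'(0, \cdot) = \rho'(\cdot, 0) = 0$). There is essentially no obstacle here: the only point requiring a word of justification is the norm attainment of compact operators on Hilbert spaces, and that is a well-known textbook fact that can be stated without proof, exactly as the text preceding the corollary already does ("As every compact operator on a Hilbert space always attains its norm"). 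So the entire proof is one or two sentences invoking Theorem~\ref{left; main}.
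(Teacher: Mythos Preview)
Your proposal is correct and matches the paper's approach exactly: the paper simply remarks that every compact operator on a Hilbert space attains its norm and then invokes Theorem~\ref{left; main}, which is precisely what you do.
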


Further we extend our investigation for the \emph{diagonal operator} which may or may not attain its norm. Given a separable Hilbert space $\mathbb{H}$ if there exists an orthonormal basis $\{e_n\}$ of $\mathbb{H}$ such that $T\in \Li(\mathbb{H})$ satisfies $Te_n=\lambda_ne_n$ for each $n\in \mathbb{N}$ and $\lambda_n\in \mathbb{C},$ then we call $T$ a \emph{diagonal operator}.

\begin{theorem}\label{diagonal; infinite}
	Let $\mathbb{H}$ be separable                                                                                                                                                                                                                                               Hilbert space and let $T\in \Li(\mathbb{H})$ be diagonal. Then $T$ is $\rho$-left symmetric if and only if $T=0.$
\end{theorem}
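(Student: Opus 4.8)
The plan is to argue by contradiction, splitting according to whether $M_T$ is empty. If $M_T\neq\emptyset$, i.e.\ $T$ attains its norm, then $T=0$ is forced directly by Theorem \ref{left; main}. So the substance of the proof is the case $M_T=\emptyset$, which (a diagonal operator on a finite-dimensional space always attains its norm) can only occur when $\dim\mathbb{H}=\infty$; there I will construct, for an arbitrary nonzero diagonal $T$, an operator $A\in\Li(\mathbb{H})$ with $T\perp_\rho A$ but $A\not\perp_\rho T$, contradicting $\rho$-left symmetry.

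For the construction, normalise $\|T\|=1$, so $0\le|\lambda_n|<1$ for all $n$ while $\sup_n|\lambda_n|=1$. Since the supremum is not attained I may choose indices $q_1<q_2<\cdots$ with $|\lambda_{q_j}|$ strictly increasing to $1$ (in particular nonzero), group them into consecutive triples $B_k=\{q_{3k-2},q_{3k-1},q_{3k}\}$, and for these indices set $\omega_n=\lambda_n/|\lambda_n|$. Relabelling the three indices of $B_k$ as $1,2,3$, define $A$ block-diagonally by
\[
Ae_1=\tfrac1{\sqrt2}(\omega_1e_1+\omega_2e_2),\qquad Ae_2=\tfrac1{\sqrt2}(\omega_2e_2-\omega_1e_1),\qquad Ae_3=-\tfrac1{\sqrt2}\,\omega_3e_3
\]
on each block $B_k$, and $Ae_n=0$ for every index $n$ not among the $q_j$. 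On each block the matrix of $A$ is a unitary diagonal times the orthogonal matrix $\tfrac1{\sqrt2}\bigl(\begin{smallmatrix}1&-1&0\\ 1&1&0\\ 0&0&-1\end{smallmatrix}\bigr)$, so $\|A\|=1$ and the norm is attained on $\mathrm{span}\{e_1,e_2\}$ inside each block.

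To check $T\perp_\rho A$ I use Theorem \ref{cor;rho-symmetry}, so I must show $\sup\Re W_T(A^*T)+\inf\Re W_T(A^*T)=0$. Because $\omega_n\overline{\lambda_n}=|\lambda_n|$, a direct computation gives, for a unit vector $z=c_1e_1+c_2e_2+c_3e_3$ supported in one block,
\[
\Re\langle Tz,Az\rangle=\tfrac1{\sqrt2}\Bigl(|\lambda_1||c_1|^2+|\lambda_2||c_2|^2-|\lambda_3||c_3|^2+(|\lambda_2|-|\lambda_1|)\,\Re(c_1\overline{c_2})\Bigr).
\]
Now fix any norming sequence $\{x_m\}$ of $T$. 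Two elementary facts drive the estimate: $\langle x_m,e_n\rangle\to0$ for every fixed $n$ (since $|\lambda_n|<1$), hence the mass of $x_m$ on any finite set of coordinates tends to $0$; and, since $A$ is block-diagonal and $T$ diagonal, $\langle Tx_m,Ax_m\rangle=\sum_k\langle T(P_{B_k}x_m),A(P_{B_k}x_m)\rangle$ where $P_{B_k}$ is the coordinate projection onto $B_k$. Inserting the displayed identity, using $|\Re(c_1\overline{c_2})|\le\frac12$, $|\lambda_n|\le1$ and $\varepsilon_{B_k}:=1-|\lambda_{q_{3k-2}}|\to0$, and truncating the block-sum at a large index before letting $m\to\infty$, one obtains $\sup\Re W_T(A^*T)\le\frac1{\sqrt2}$ and $\inf\Re W_T(A^*T)\ge-\frac1{\sqrt2}$; both bounds are realised along $x_m=e_{q_{3m-2}}$ and $x_m=e_{q_{3m}}$, respectively. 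Hence the sum is $0$ and $T\perp_\rho A$.

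Finally I show $A\not\perp_\rho T$. Since $\|Az\|^2=\|z\|^2-\frac12|\langle z,e_3\rangle|^2$ on each block span and $A$ annihilates all non-$q_j$ coordinates, $\|Ax_m\|\to\|A\|=1$ forces both the mass of $x_m$ on the non-$q_j$ coordinates and the quantity $\sum_k|\langle x_m,e_{3,B_k}\rangle|^2$ to tend to $0$. Feeding this, together with the per-coordinate vanishing above, into $\langle Ax_m,Tx_m\rangle=\sum_k\langle A(P_{B_k}x_m),T(P_{B_k}x_m)\rangle$, the only surviving contribution comes from the $e_1,e_2$ coordinates of the late blocks, and one gets $\Re\langle Ax_m,Tx_m\rangle\to\frac1{\sqrt2}$ for \emph{every} norming sequence $\{x_m\}$ of $A$. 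Thus $\Re W_A(T^*A)=\{\tfrac1{\sqrt2}\}$, so $\sup\Re W_A(T^*A)+\inf\Re W_A(T^*A)=\sqrt2\neq0$ and $A\not\perp_\rho T$ by Theorem \ref{cor;rho-symmetry}. This contradiction forces $T=0$; the converse is trivial. The essential difficulty — and the reason a rank-one or single-coordinate choice of $A$ cannot work — is that when $M_T=\emptyset$ the norming sequences of $T$ carry their mass off to infinity along whichever coordinates have $|\lambda_n|$ near $1$; the block construction together with the two truncation estimates is precisely what tames these spread-out sequences while keeping the norming behaviour of $A$ itself asymmetric.
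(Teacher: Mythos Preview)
Your argument for $T\perp_\rho A$ is sound, but the step establishing $A\not\perp_\rho T$ has a genuine gap. You invoke ``the per-coordinate vanishing above'' for norming sequences of $A$, yet that vanishing was proved only for norming sequences of $T$, and it depended essentially on $M_T=\emptyset$. The operator $A$, however, \emph{does} attain its norm: every $e_{q_{3k-2}}$ and $e_{q_{3k-1}}$ lies in $M_A$. In particular the constant sequence $x_m\equiv e_{q_1}$ is norming for $A$, and for it $\Re\langle Ax_m,Tx_m\rangle=|\lambda_{q_1}|/\sqrt2<1/\sqrt2$; so your claim $\Re W_A(T^*A)=\{1/\sqrt2\}$ is false. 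Worse, on the first block the quantity $|\lambda_1|\,|c_1|^2+|\lambda_2|\,|c_2|^2+(|\lambda_2|-|\lambda_1|)\Re(c_1\overline{c_2})$ can be negative when $|\lambda_{q_1}|$ and $|\lambda_{q_2}|$ are far apart, so without further control $\Re W_A(T^*A)$ need not even sit in the positive half-line, and you cannot conclude $\sup+\inf\ne0$.

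The repair is easy: when choosing the subsequence, additionally require $|\lambda_{q_1}|>1/2$ (possible since $\sup_n|\lambda_n|=1$). Then $|\lambda_{2,k}|-|\lambda_{1,k}|<1/2$ for every block, and for any norming sequence $\{x_m\}$ of $A$ a direct estimate gives
\[
\Re\langle Ax_m,Tx_m\rangle\ \ge\ \frac1{\sqrt2}\Bigl[\tfrac12\sum_k\bigl(|c_{1,k}|^2+|c_{2,k}|^2\bigr)-\tfrac14\sum_k\bigl(|c_{1,k}|^2+|c_{2,k}|^2\bigr)-\sum_k|c_{3,k}|^2\Bigr]\ \longrightarrow\ \frac1{4\sqrt2}>0,
\]
whence $\inf\Re W_A(T^*A)>0$ and $A\not\perp_\rho T$. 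For comparison, the paper's route in the case $M_T=\emptyset$ is much lighter: it takes $A$ \emph{diagonal}, $Ae_k=(\lambda_k/k)e_k$, so that (after an obvious relabelling) $M_A=\{\mu e_1:|\mu|=1\}$; it then shows $W_T(A^*T)=\{0\}$ while $\langle Ae_1,Te_1\rangle=|\lambda_1|^2\ne0$. Your block construction, lifted from Case~I of Theorem~\ref{left; main}, is a natural idea but introduces complications that a diagonal $A$ avoids entirely.
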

\begin{proof}
	Note that if $M_T\neq \emptyset,$ then following Theorem \ref{left; main}, we get our desired result. So let us assume that $M_T=\emptyset.$ Let $T$ be nonzero with $\|T
	\|=1$. Suppose that $\{e_n\}_{n\in \mathbb{N}}$ is the standard orthonormal basis for $\mathbb{H}$ and $Te_k=\lambda_ke_k,$ for all $k\in \mathbb{N}.$ Without loss of generality assume that $|\lambda_1|\neq 0.$  Let us consider the sequence $\{\lambda_n\}$ such that $|\lambda_n|\to 1.$ Define $A\in \Li(\mathbb{H})$ as: $$ Ae_k=\frac{\lambda_k}{k}e_k, \forall k \in \mathbb{N}.$$ For any norming sequence $z_n$ of $A,$ we write $z_n=\sum_{k=1}^\infty \alpha_{n, k} e_k,$ where for each $n,$ $\sum_{k} |\alpha_{n, k}|^2=1.$ We want to show that for every $\epsilon>0,$ $|\langle Tz_n, Az_n\rangle |< C\epsilon,$ for some fixed $C>0.$  Given $\epsilon >0,$ consider $N\in \mathbb{N}$ such that $\sup_{k\geq N} \frac{|\lambda_k|}{|k|} < \epsilon.$ Now
	\begin{eqnarray*}
		|\langle Tz_n, Az_n\rangle|&=& |\langle \sum \alpha_{n, k}Te_k, \sum \alpha_{n, k}Ae_k\rangle|\\
		&=&|\sum |\alpha_{n, k}|^2\langle Te_k, Ae_k\rangle|\\
		&\leq&  \sum_{k=1}^{N-1} |\alpha_{n, k}|^2\frac{|\lambda_k|^2}{|k|} + \sum_{k\geq N} |\alpha_{n, k}|^2\frac{|\lambda_k|^2}{|k|}.		
	\end{eqnarray*}
	Note that since $\sup|\lambda_k|=1$ and $\sum_{k\geq N} |\alpha_{n, k}|^2\le 1,$ it follows that $$\sum_{k\geq N} |\alpha_{n, k}|^2\frac{|\lambda_k|^2}{|k|} \leq \sup|\lambda_k| \sum_{k\geq N} |\alpha_{n, k}|^2\frac{|\lambda_k|}{|k|}< \epsilon.$$ Now we observe the finite sum. Let us consider $\max_{1\leq k\leq N-1} |\lambda_k|^2=m<1.$ Also, assume that $s_n=\sum_{k=1}^{N-1} |\alpha_{n, k}|^2.$ Then we get $\sum_{k=1}^{N-1} |\alpha_{n, k}|^2\frac{|\lambda_k|^2}{|k|}\leq ms_n.$ Observe that $$\|Tz_n\|^2= \sum_{k=1}^{N-1} |\alpha_{n, k}|^2|\lambda_k|^2+ \sum_{k\geq N} |\alpha_{n, k}|^2|\lambda_k|^2 \leq ms_n+1.(1-s_n)= 1-(1-m)s_n.$$ This implies $1-\|Tz_n\|^2\geq (1-m)s_n.$ For sufficiently large $n,$ we obtain that $s_n \leq \epsilon$ as $m\neq1.$ Therefore, 
	$$|\langle Tz_n, Az_n\rangle|\leq \sum_{k=1}^{N-1} |\alpha_{n, k}|^2\frac{|\lambda_k|^2}{|k|}+ \sum_{k\geq N} |\alpha_{n, k}|^2\frac{|\lambda_k|^2}{|k|}< \epsilon(m+1).$$ Thus for each norming sequence $\{z_n\}$ of $A$, we have $\langle Tz_n, Az_n\rangle \to 0.$ From Theorem \ref{cor;rho-symmetry} we get $T\perp_\rho A.$ On the other hand, note that $\|A\|=|\lambda_1|$ and $M_A=\{\mu e_1: |\mu|=1\}.$ Therefore, $\langle Ae_1, Te_1\rangle =|\lambda_1|^2 \neq 0$ implies that $A\not\perp_\rho T.$ This contradicts the fact that $T$ is $\rho$-left symmetric. Therefore, $T=0$, which completes the proof of the theorem.
	 \end{proof}
\subsection{$\rho$-right symmetry:}
Next we turn our attention towards $\rho$-right symmetric operators in $\Li(\mathbb{H}).$ First we observe the following result.
	\begin{theorem}\label{th; right unitary}
		Let $\mathbb{H}$ be a real Hilbert space with $\dim \mathbb{H}=2$ and let $T \in \mathcal{L}(\mathbb{H}).$  $T$ is $\rho$-right symmetric if and only if $T$ is a scalar multiple of isometry. 
	\end{theorem}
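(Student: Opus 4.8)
The plan is to read off everything from the maximal‑numerical‑range criterion of Theorem~\ref{cor;rho-symmetry}, using that in a real plane, for any $S,M\in\Li(\mathbb{H})$ the set $W_S(M^{*}S)$ is a compact \emph{real} interval, which is symmetric about the origin precisely when its two endpoints are negatives of each other; for a $2\times 2$ matrix $M$ the numerical range $W(M)$ is such an interval and it is symmetric about the origin exactly when $\operatorname{tr}(M)=0$. I treat the two implications separately.

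\textbf{Sufficiency.} Since $\rho$‑orthogonality is real‑homogeneous (Proposition~\ref{homogeneity}(i)) and in dimension two every isometry is an orthogonal matrix, writing $T=cU$ with $U$ orthogonal it suffices (the case $c=0$, i.e. $T=0$, being immediate) to prove each orthogonal $U\in\Li(\mathbb{H})$ is $\rho$‑right symmetric. For such $U$ we have $M_U=S_\mathbb{H}$, so $W_U(A^{*}U)$ equals the numerical range of the matrix $A^{*}U$, whence by Theorem~\ref{cor;rho-symmetry}, $U\perp_\rho A\iff\operatorname{tr}(A^{*}U)=0\iff\operatorname{tr}(U^{*}A)=0$. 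Now suppose $A\perp_\rho U$. A $2\times2$ matrix has either $M_A=S_\mathbb{H}$ (then $A$ is a scalar multiple of an orthogonal matrix, or $A=0$) or $M_A=\{\pm u\}$ for a unit vector $u$. In the first case the identical computation gives $A\perp_\rho U\iff\operatorname{tr}(U^{*}A)=0$, hence $U\perp_\rho A$. In the second case $W_A(U^{*}A)$ is the single point $\langle Au,Uu\rangle$, so $A\perp_\rho U$ forces $\langle Au,Uu\rangle=0$; taking a singular value decomposition $A=V\Sigma W^{*}$ with $u$ the first column $w_1$ of $W$ (and $\sigma_1>0$), this says the $(1,1)$ entry of the orthogonal matrix $O:=W^{*}U^{*}V$ vanishes, since $O_{11}=\langle Ve_1,UWe_1\rangle=\sigma_1^{-1}\langle Au,Uu\rangle=0$. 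A $2\times2$ orthogonal matrix with one vanishing diagonal entry has the other vanishing too, so $\operatorname{tr}(U^{*}A)=\operatorname{tr}(\Sigma O)=\sigma_1O_{11}+\sigma_2O_{22}=0$, and again $U\perp_\rho A$. Thus every orthogonal $U$ is $\rho$‑right symmetric.

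\textbf{Necessity.} Assume $T\neq 0$ (the case $T=0$ being trivial) is $\rho$‑right symmetric and, towards a contradiction, not a scalar multiple of an isometry; then its top singular value is strictly largest, so $M_T=\{\pm v\}$ for a single unit vector $v$, and Theorem~\ref{cor;rho-symmetry} together with $W_T(A^{*}T)=\{\langle Av,Tv\rangle\}$ gives $T\perp_\rho A\iff\langle Av,Tv\rangle=0$ for all $A$. I will produce $A$ with $A\perp_\rho T$ but $\langle Av,Tv\rangle\neq 0$. Fix a unit vector $u$ with $u\neq\pm v$, $\langle u,v\rangle\neq 0$ and $Tu\neq 0$ (all but finitely many $u\in S_\mathbb{H}$ qualify); let $w$ be a unit vector orthogonal to $Tu$, fix an orthonormal $\{u,u^{\perp}\}$ and write $v=\alpha u+\beta u^{\perp}$, so $\alpha,\beta\neq 0$; and for a real parameter $\gamma$ with $|\gamma|\,\|Tu\|<1$ define $A$ by $Au=w$, $Au^{\perp}=\gamma\,Tu$. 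Since $Ax=\cos t\,w+\sin t\,\gamma Tu$ for $x=\cos t\,u+\sin t\,u^{\perp}$ and $w\perp Tu$, one checks $\|A\|=1$ with $M_A=\{\pm u\}$; and $\langle Au,Tu\rangle=\langle w,Tu\rangle=0$, so $W_A(T^{*}A)=\{0\}$ and $A\perp_\rho T$. On the other hand $Av=\alpha w+\beta\gamma\,Tu$, hence
\[
\langle Av,Tv\rangle=\alpha\,\langle w,Tv\rangle+\beta\gamma\,\langle Tu,Tv\rangle,
\]
an affine function of $\gamma$ whose two coefficients cannot both vanish: $\{Tu,w\}$ is an orthogonal basis of $\mathbb{H}$ and $Tv\neq 0$ because $\|Tv\|=\|T\|$, so $Tv$ is orthogonal to neither $Tu$ nor $w$, while $\alpha,\beta\neq 0$. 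Therefore $\langle Av,Tv\rangle\neq 0$ for all but at most one admissible $\gamma$, giving $T\not\perp_\rho A$ and contradicting $\rho$‑right symmetry of $T$. Hence $T$ is a scalar multiple of an isometry.

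\textbf{Main obstacle.} The delicate direction is necessity: because $\perp_\rho$ is strictly finer than Birkhoff--James orthogonality, one cannot simply import the description of Birkhoff--James right‑symmetric operators, and the test operator $A$ has to be engineered so that it is \emph{genuinely} $\rho$‑orthogonal to $T$ --- which pins $M_A$ down to a single antipodal pair $\{\pm u\}$ on which $\langle A\cdot,T\cdot\rangle$ vanishes --- while the free parameter $\gamma$ is precisely what prevents $T\perp_\rho A$. On the sufficiency side the only non‑formal ingredient is the singular‑value‑decomposition argument for the case $M_A=\{\pm u\}$, which rests on the elementary fact that a $2\times2$ orthogonal matrix with a vanishing diagonal entry has both diagonal entries zero.
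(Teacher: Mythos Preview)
Your argument is correct and takes a genuinely different route from the paper's, particularly on the sufficiency side.

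For sufficiency the paper works with $I$ rather than a general orthogonal $U$: assuming $A\perp_\rho I$ with $M_A=\{\pm x_0\}$, it writes $Ax_0=y_0\perp x_0$ and then spends most of the proof on a direct norm--inequality computation (several lines of case analysis on the size of $|\langle Ay_0,y_0\rangle|$) to force $\langle Ay_0,y_0\rangle=0$, after which $W(A)=-W(A)$ follows. Your approach bypasses these estimates entirely: you observe that for orthogonal $U$ the condition $U\perp_\rho A$ reduces to $\operatorname{tr}(U^{*}A)=0$, and then the SVD step together with the elementary fact that a $2\times2$ orthogonal matrix with a vanishing diagonal entry has both diagonal entries zero gives $\operatorname{tr}(U^{*}A)=\sigma_1O_{11}+\sigma_2O_{22}=0$ at once. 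This is cleaner and more conceptual, and it extends immediately from $I$ to arbitrary orthogonal $U$ without invoking Lemma~\ref{lem}. For necessity the paper splits into the cases $Tz_0=0$ and $Tz_0\neq 0$ (with $z_0\perp x_0$) and builds a specific $A$ in each; your single parametric construction $Au=w$, $Au^{\perp}=\gamma\,Tu$ handles both at once via the affine dependence on $\gamma$.

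One small wording issue: in the necessity part you write that ``$Tv$ is orthogonal to neither $Tu$ nor $w$'', which is stronger than needed and not always true (e.g.\ $Tv$ could be a multiple of $Tu$). What your argument actually requires---and what follows from $Tv\neq 0$ and $\{Tu,w\}$ spanning $\mathbb{H}$---is only that $Tv$ is not orthogonal to \emph{both}, so that the two coefficients $\alpha\langle w,Tv\rangle$ and $\beta\langle Tu,Tv\rangle$ are not simultaneously zero. With that correction the affine function of $\gamma$ is either nonconstant or a nonzero constant, and your conclusion stands.
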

	
	\begin{proof}
		To prove the sufficient part we only need to show that the identity operator $I\in \Li(\mathbb{H})$ is $\rho$-right symmetric. Let $A \perp_\rho I,$ for some $A \in \mathcal{L}(\mathbb{H})$. Without loss of generality assume that $\|A\|=1.$ Note that either $M_A= S_\mathbb{H}$ or $M_A = \{\pm x_0\},$ for some $x_0 \in S_\mathbb{H}.$ If $M_A= S_\mathbb{H}$ then $A \perp_\rho I$ implies that  $W(A)= W(-A).$ This shows from Theorem \ref{cor;rho-symmetry} that $I \perp_\rho A.$ Now let us assume that $M_A=\{\pm x_0\}.$ So $A \perp_\rho I$ implies $\langle Ax_0, x_0 \rangle =0.$ Let $Ax_0=y_0.$ Since $y_0 \neq 0,$ it follows that $x_0 \perp_B y_0.$ We claim that $\langle Ay_0, y_0 \rangle =0.$ If possible let $\langle Ay_0, y_0 \rangle \neq 0.$ For any $z \in S_\mathbb{H},$ we can write $z = \alpha x_0+ \beta y_0,$ where $|\alpha|^2 + |\beta|^2 =1.$  Now 
		\begin{eqnarray*}
		\|Az\|^2 &=& \langle \alpha Ax_0 +\beta Ay_0, \alpha Ax_0 +\beta Ay_0\rangle \\
		&=& \langle \alpha y_0 +\beta Ay_0, \alpha y_0 +\beta Ay_0\rangle\\
		&=& |\alpha|^2 \|y_0\|^2 + |\beta|^2 \|Ay_0\|^2 + 2 \alpha \beta \langle Ay_0, y_0\rangle\\
		&=& 1 + 2\alpha \beta \langle Ay_0, y_0\rangle - \beta^2(1- \|Ay_0\|^2).
		\end{eqnarray*}
		First suppose that $1-\|Ay_0\|^2 < |\langle Ay_0, y_0\rangle|.$ Then we take $\alpha = \frac{1-\|Ay_0\|^2}{\langle Ay_0, y_0\rangle}$ and $\beta = \sqrt{1-\alpha^2}.$ From the above equation we get that $$\|Az\|^2 = 1+ (1-\|Ay_0\|^2)(2\sqrt{1-\alpha^2}-(1-\alpha^2)).$$ Clearly, this implies that $\|Az\|^2 > 1,$ which is not possible. Now suppose that $1-\|Ay_0\|^2 \geq |\langle Ay_0, y_0\rangle|.$ From this we obtain that $$ |\langle Ay_0, y_0\rangle|^2 \leq \|Ay_0\|^2 \leq 1-|\langle Ay_0, y_0\rangle| < 1- |\langle Ay_0, y_0\rangle|^2 < \sqrt{1-|\langle Ay_0, y_0\rangle|^2}.$$ Therefore, we get $|\langle Ay_0, y_0\rangle| < \frac{1}{2^{\frac{1}{4}}}. $ Now we consider $\alpha=\sqrt{1-\langle Ay_0, y_0\rangle^2}$ and therefore, $\beta=\langle Ay_0, y_0\rangle.$ Clearly, $0 < \alpha, \beta <1.$ Now we put $\alpha $ and $\beta $ in the following expression:
		\begin{eqnarray*}
			2\alpha \beta \langle Ay_0, y_0\rangle - \beta^2(1- \|Ay_0\|^2)&=& 2\langle Ay_0, y_0\rangle^2\bigg(2\sqrt{1-\langle Ay_0, y_0\rangle^2} -(1-\|Ay_0\|^2)\bigg)\\
			&>& \langle Ay_0, y_0\rangle^2 \bigg(2\sqrt{1-\frac{1}{2^{\frac{1}{2}}}} - (1-\|Ay_0\|^2)\bigg) \\
			&>& 0. \quad \quad  (\mbox{As} \, \,  1-\|Ay_0\|^2<1)
		\end{eqnarray*}
		 This contradicts the fact that $\|A\|=1.$ Thus we get $\langle Ay_0, y_0\rangle=0.$ So,  $W(A)=\{\langle Az, z\rangle : x \in S_\mathbb{H}\} = \{\alpha \beta (\langle Ax_0, y_0\rangle + \langle Ay_0, x_0\rangle): \alpha^2+\beta^2=1\}.$ Clearly, this implies $W(A)= W(-A)$ and so $I \perp_\rho A.$ Thus for any isometry $T,$ it is easy to see that $T \perp_\rho A \implies A \perp_\rho T.$\\
		
		Next we prove the necessary part. Suppose on the contrary $T\in \Li(\mathbb{H})$ is not an isometry. Let  $M_T=\{\pm x_0: x_0\in S_\mathbb{H}\}.$ Assume that $z_0\in S_\mathbb{H}$ such that $x_{0}^{\perp} = span\{z_0\}.$ If $Tz_0=0$ then we consider $A\in \Li(\mathbb{H})$ such that $Ax_0=\frac{Tx_0}{2\|T\|^2}$ and $Az_0=z_0.$ Then clearly, $M_A=\{\pm z_0: |\mu|=1\}$ and $\langle Tz_0, Az_0\rangle =0.$ Thus we get $A\perp_\rho T.$ But $\langle Tx_0, Ax_0\rangle = \frac{1}{2}\neq 0$ and so $T\not \perp_\rho A.$ Next, suppose that $Tz_0\neq 0.$ Let $w_0(\neq 0)\in \mathbb{H} $ be such that $w_0\perp Tz_0.$ We consider $A\in \Li(\mathbb{H})$ defined as $Ax_0=Tx_0$ and $Az_0 = rw_0,$ where $r$ is a positive scalar such that $\|Tx_0\|< r\|w_0\|.$ Following the definition of $A$, one can see that $M_A=\{\pm z_0:\}.$ As $w_0\perp Tz_0,$ $\langle Az_0, Tz_0\rangle = 0.$ This gives $A \perp_\rho T,$ whereas, $\langle Tx_0, Ax_0\rangle =\|T\|^2 \neq 0$ implies $T\not\perp_\rho A. $ This contradiction proves the necessary part. Hence the theorem. 
	\end{proof}
	
As a corollary, we note the following result  related to the geometry of numerical range and the maximal numerical range of an operator.
	\begin{cor}\label{cor; sym}
		Let $\mathbb{H}$ be a real Hilbert space and let $\dim \mathbb{H}=2.$ Then for any $A\in \Li(\mathbb{H}),$ the numerical range of $A$ is symmetric with respect to the origin if and only if the maximal numerical range is also symmetric with respect to the origin.
	\end{cor}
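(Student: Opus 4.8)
The plan is to recognise both geometric symmetry conditions as instances of $\rho$-orthogonality against the identity operator, and then to use that $I\in\Li(\mathbb{H})$ is $\rho$-symmetric on a two-dimensional real Hilbert space, being trivially a scalar multiple of an isometry (Theorems~\ref{prop} and~\ref{th; right unitary}).

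First I would set up a dictionary via Theorem~\ref{cor;rho-symmetry}. Taking the first operator there to be $I$: since $M_I=S_\mathbb{H}$, the set $W_I(A^*I)$ is the closure of the numerical range $W(A^*)$, which in the real finite-dimensional setting is a compact subset of $\mathbb{R}$ equal to $W(A)$; as $W(A)\subseteq\mathbb{R}$, the condition ``its real part is symmetric about the origin'' reduces to $W(A)=-W(A)$. Hence $I\perp_\rho A$ if and only if $W(A)$ is symmetric about the origin. Taking instead the second operator in Theorem~\ref{cor;rho-symmetry} to be $I$, the relevant set is $\{\lim_n\langle Ax_n,x_n\rangle:\{x_n\}\subset S_\mathbb{H},\ \|Ax_n\|\to\|A\|\}$, i.e.\ the maximal numerical range $W_0(A)$, again contained in $\mathbb{R}$; hence $A\perp_\rho I$ if and only if $W_0(A)$ is symmetric about the origin.

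It then remains to splice these with the $\rho$-symmetry of $I$: Theorem~\ref{th; right unitary} gives $A\perp_\rho I\implies I\perp_\rho A$ and Theorem~\ref{prop} gives $I\perp_\rho A\implies A\perp_\rho I$, so $I\perp_\rho A\iff A\perp_\rho I$ for every $A\in\Li(\mathbb{H})$. Chaining the equivalences,
\[
W(A)=-W(A)\iff I\perp_\rho A\iff A\perp_\rho I\iff W_0(A)=-W_0(A),
\]
which is precisely the assertion of Corollary~\ref{cor; sym}.

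The whole argument is bookkeeping once the dictionary is in place; the only step demanding genuine care---and the closest thing here to an obstacle---is checking that the abstract maximal-numerical-range sets in Theorem~\ref{cor;rho-symmetry} really do collapse, for the two choices $T=I$ resp.\ $A=I$, to $\overline{W(A)}$ resp.\ to $W_0(A)$, and that the ``real part'' there is redundant because in a real space these sets already lie in $\mathbb{R}$. I do not anticipate any deeper difficulty.
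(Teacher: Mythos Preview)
Your proposal is correct and follows essentially the same route as the paper: the paper's proof also translates the two symmetry conditions into $I\perp_\rho A$ and $A\perp_\rho I$ respectively, and then appeals to Theorems~\ref{prop} and~\ref{th; right unitary} to conclude that $I$, being an isometry, is both $\rho$-left and $\rho$-right symmetric in the two-dimensional real case. Your write-up is in fact more explicit than the paper's about why the dictionary holds (the paper simply asserts the equivalence as an observation), so there is nothing to add.
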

	\begin{proof}
		One can observe that the necessary part is true if and only if the identity operator $I\in \Li(\mathbb{H})$ is $\rho$-left symmetric, whereas the sufficient part holds true if and only if $I$ is $\rho$-right symmetric.
		In case of the two-dimensional real Hilbert space, Theorems \ref{prop} and \ref{th; right unitary} demonstrate that only isometries satisfy these conditions. As the identity operator is isometry, the result holds true.
	\end{proof}
	From Theorem \ref{left; main}, we note that the necessary part of the above corollary is not true, whenever $\dim\mathbb{X}\geq 3$. Moreover, in the next theorem we observe that the sufficient part of the above corollary fails to hold whenever $\dim \mathbb{H}\geq 3.$
	
	\begin{theorem}\label{right; isometry}
		Let $\mathbb{H}$ be a Hilbert space with $\dim \mathbb{H} \geq 3$. Let $T \in \mathcal{L}(\mathbb{H})$ be an isometry. Then $T$ is not $\rho$-right symmetric.
	\end{theorem}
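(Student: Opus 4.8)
The plan is to produce, for an arbitrary isometry $T\in\Li(\mathbb{H})$ with $\dim\mathbb{H}\ge 3$, an operator $A\in\Li(\mathbb{H})$ satisfying $A\perp_\rho T$ but $T\not\perp_\rho A$; by definition this shows $T$ is not $\rho$-right symmetric. Note first that since $T$ is an isometry, $\|T\|=1$, $M_T=S_\mathbb{H}$, and $T$ carries orthonormal sets to orthonormal sets. As $\dim\mathbb{H}\ge 3$, fix orthonormal vectors $e_1,e_2,e_3\in\mathbb{H}$, so that $Te_1,Te_2,Te_3$ are orthonormal, and define $A\in\Li(\mathbb{H})$ by
$$Ae_1=Te_2,\qquad Ae_2=0,\qquad Ae_3=\tfrac{3}{4}\,Te_3,\qquad Ax=0\ \ (x\perp\mathrm{span}\{e_1,e_2,e_3\}).$$
Using $Te_2\perp Te_3$, for a unit vector $x=\gamma_1e_1+\gamma_2e_2+\gamma_3e_3+x'$ (with $x'\perp\mathrm{span}\{e_1,e_2,e_3\}$) one finds $\|Ax\|^2=|\gamma_1|^2+\tfrac{9}{16}|\gamma_3|^2\le 1$, with equality only if $\gamma_2=\gamma_3=0$ and $x'=0$; hence $\|A\|=1$ and $M_A=\{\mu e_1:|\mu|=1\}$.

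Since $M_A$ is a single ray, Corollary \ref{cor; unique} gives that $A\perp_\rho T$ is equivalent to $\rho'(Ae_1,Te_1)=0$, i.e.\ (as $\mathbb{H}$ is a Hilbert space, hence smooth) to $\Re\langle Ae_1,Te_1\rangle=0$; and indeed $\langle Ae_1,Te_1\rangle=\langle Te_2,Te_1\rangle=0$. So $A\perp_\rho T$.

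To see $T\not\perp_\rho A$, I would invoke Theorem \ref{cor;rho-symmetry}: $T\perp_\rho A$ if and only if $\{\Re z:z\in W_T(A^*T)\}$ is symmetric about the origin. Because $T$ is an isometry, the norming condition $\|Tx_n\|\to\|T\|$ holds for every sequence in $S_\mathbb{H}$, whence $W_T(A^*T)=\overline{W(A^*T)}$, and therefore $\{\Re z:z\in W_T(A^*T)\}$ equals the closure of the numerical range of the self-adjoint operator $H:=\tfrac{1}{2}(A^*T+T^*A)$. Using $T^*T=I$ one computes $T^*Ae_1=e_2$, $T^*Ae_2=0$, $T^*Ae_3=\tfrac{3}{4}e_3$ and $T^*A=0$ on $\mathrm{span}\{e_1,e_2,e_3\}^\perp$; hence $H$ vanishes on $\mathrm{span}\{e_1,e_2,e_3\}^\perp$ and on $\mathrm{span}\{e_1,e_2,e_3\}$ is represented in the basis $e_1,e_2,e_3$ by $\bigl(\begin{smallmatrix}0&1/2&0\\ 1/2&0&0\\ 0&0&3/4\end{smallmatrix}\bigr)$, whose eigenvalues are $-\tfrac{1}{2},\tfrac{1}{2},\tfrac{3}{4}$. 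Thus $\sup\{\Re z:z\in W_T(A^*T)\}=\tfrac{3}{4}\neq\tfrac{1}{2}=-\inf\{\Re z:z\in W_T(A^*T)\}$, so the set is not symmetric about the origin and $T\not\perp_\rho A$. Together with $A\perp_\rho T$, this yields the theorem.

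The step I expect to require the most care is the construction of $A$: it must have a one-dimensional norm-attainment set (so that $A\perp_\rho T$ follows directly from Corollary \ref{cor; unique}) while simultaneously skewing the self-adjoint part of $A^*T$. This is precisely where the hypothesis $\dim\mathbb{H}\ge 3$ enters — the extra orthonormal direction carrying $\tfrac{3}{4}\,Te_3$ is what tilts the spectrum of $H$ off-center, which is impossible in dimension two, consistently with Theorem \ref{th; right unitary}.
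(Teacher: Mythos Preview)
Your proof is correct. Both you and the paper exhibit an explicit $A$ with $A\perp_\rho T$ but $T\not\perp_\rho A$, and both use the third orthonormal direction to break the symmetry; the constructions differ only in detail. The paper takes $Ax_{\alpha_1}=Tx_{\alpha_2}$, $Ax_{\alpha_2}=-Tx_{\alpha_1}$, and $Ax_\alpha=\tfrac12 Tx_\alpha$ elsewhere, so $M_A$ is the unit sphere of $\mathrm{span}\{x_{\alpha_1},x_{\alpha_2}\}$; it then verifies $\Re\langle Av,Tv\rangle=0$ on $M_A$ by a direct two-variable computation, and shows $T\not\perp_\rho A$ by observing $\Re\langle Tz,Az\rangle=\tfrac12\sum_{i\neq1,2}|c_{\alpha_i}|^2\ge0$ for every $z\in S_\mathbb{H}$ with strict inequality somewhere. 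Your variant arranges $M_A$ to be a single ray, which lets you dispose of $A\perp_\rho T$ via Corollary~\ref{cor; unique} in one line; for $T\not\perp_\rho A$ you pass through Theorem~\ref{cor;rho-symmetry} and compute the spectrum of $H=\tfrac12(A^*T+T^*A)$, obtaining $[-\tfrac12,\tfrac34]$. Your route trades a slightly heavier spectral calculation for a cleaner handling of the $A\perp_\rho T$ direction; the paper's route is entirely elementary but needs the two-dimensional norm-attainment check. Either way the essential mechanism---a rotation-like piece on two coordinates plus a sub-unit scalar on a third to tilt the real part of $W_T(A^*T)$ off-center---is the same.
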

\begin{proof}
	Suppose that $\{x_\alpha: \alpha\in \Lambda\}$ is an orthonormal basis for $\mathbb{H}.$ Let us consider $A \in \mathcal{L}(\mathbb{H})$ such that $$Ax_{\alpha_1}= Tx_{\alpha_2}, Ax_{\alpha_2}=-Tx_{\alpha_1} \, \, \mbox{and} \, \, Ax_\alpha=\frac{1}{2}Tx_\alpha, \forall \alpha\in \Lambda \setminus \{\alpha_1, \alpha_2\}.$$ We claim that $M_A=span \{x_{\alpha_1}, x_{\alpha_2}\}\cap S_\mathbb{H}.$ Indeed, let $z \in S_\mathbb{H},$ be such that $z= \sum c_{\alpha_i} x_{\alpha_i},$ Since $Tx_{\alpha} \perp Tx_{\beta},$ for all $\alpha \neq \beta,$ it follows that  $$\|Az\|^2 = c_{\alpha_1}^2+c_{\alpha_2}^2 + \frac{1}{4}\sum_{i\neq 1, 2} c_{\alpha_i}^2 \leq 1.$$ Clearly, in the above inequality,  equality holds if and only if $c_{\alpha_i}=0,$ for all $i \neq 1, 2.$  So, $M_A=span \{x_{\alpha_1}, x_{\alpha_2}\}\cap S_\mathbb{H},$ as claimed. Now for any $v \in M_A,$ we write $v = c_1 x_{\alpha_1} + c_2x_{\alpha_2}$ with $c_1^2+c_2^2 =1.$ Observe that $$\Re\langle Av, Tv \rangle = \langle c_1 Tx_{\alpha_2} -c_2 Tx_{\alpha_1}, c_1 Tx_{\alpha_1} + c_2 Tx_{\alpha_2} \rangle = 0.$$ Thus we have $A \perp_\rho T.$ On the other hand, note that for any $z= \sum c_{\alpha_i} x_{\alpha_i}\in S_\mathbb{H}$ with $\sum |c_{\alpha_i}|^2 =1$, we get $\Re\langle Tz, Az\rangle = \frac{1}{2}\sum_{i\neq 1, 2}|c_{\alpha_i}|^2 \geq 0.$ Moreover,  it can be readily seen that there exists a $z \in M_T$ such that $\Re\langle Tz, Az\rangle >0,$ we obtain $T \not \perp_\rho A.$ This proves that $T$ is not $\rho$-right symmetric. 
\end{proof}	

In the next result we show a large class of operators in $\Li(\mathbb{H})$ cannot be $\rho$-right symmetric.
\begin{theorem}\label{th; right}
	Let $T\in \Li(\mathbb{H})$ be such that $M_T=S_{H_0}$ for some subspace $H_0$ of $\mathbb{H}.$ Suppose that $H_0$ has either of the following properties:
	\begin{itemize}
		\item[(i)] $codim (H_0)\geq 2$;
		\item[(ii)] $codim (H_0)=1$ and for $w_0\perp H_0,$ $Tw_0=0.$
	\end{itemize}
	Then $T$ is $\rho$-right symmetric if and only if $T=0$.
\end{theorem}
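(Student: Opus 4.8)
The nontrivial direction is this: if $T\neq 0$ satisfies $M_T=S_{H_0}$ together with (i) or (ii), then $T$ is \emph{not} $\rho$-right symmetric (the converse is immediate, since $0\perp_\rho B$ and $B\perp_\rho 0$ for every $B$). Normalizing $\|T\|=1$, the goal is to produce $A\in\Li(\mathbb H)$ with $A\perp_\rho T$ but $T\not\perp_\rho A$. First I would record the structure forced by $M_T=S_{H_0}$ (note $M_T\neq\emptyset$, so $H_0\neq\{0\}$): $T|_{H_0}$ is an isometry, and $T(H_0)\perp T(H_0^{\perp})$ --- the latter by differentiating $\theta\mapsto\|T(x\cos\theta+y\sin\theta)\|^2$ at $\theta=0$ for unit $x\in H_0$, $y\in H_0^{\perp}$ (and the same for $iy$), which forces $\langle Tx,Ty\rangle=0$. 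In particular $K:=T(H_0)$ is a closed subspace and $T(H_0^{\perp})\subseteq K^{\perp}$, so for $u\in H_0$, $v\in H_0^{\perp}$ one has the orthogonal decomposition $T(u+v)=Tu+Tv$ with $Tu\in K$, $\|Tu\|=\|u\|$, $Tv\in K^{\perp}$.

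Next I would select three vectors. Take $p_0\in S_{H_0}$ arbitrary. Let $B:=(T|_{H_0^{\perp}})^{*}(T|_{H_0^{\perp}})$, a positive operator on $H_0^{\perp}$ with $B\le I_{H_0^{\perp}}$; here $B\neq I_{H_0^{\perp}}$, for otherwise $\|Tv\|=\|v\|$ for all $v\in H_0^{\perp}$ and $S_{H_0^{\perp}}\subseteq M_T=S_{H_0}$, which is impossible. Hence $\sigma(B)\neq\{1\}$, so I may fix $\delta>0$ for which the spectral subspace $\mathcal E:=E_B([0,1-\delta])(H_0^{\perp})\neq\{0\}$, and pick $w_0\in\mathcal E\cap S_{\mathbb H}$. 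Finally choose $f_0\in S_{\mathbb H}$ with $f_0\perp Tp_0$ and $f_0\perp Tw_0$: this is possible because $span\{Tp_0,Tw_0\}$ has dimension $\le 2$, while in case (i) $\dim\mathbb H\ge 3$ (as $\dim H_0\ge1$ and $codim\,H_0\ge 2$) and in case (ii) $Tw_0=0$, leaving a single constraint with $\dim\mathbb H\ge 2$. Now define the rank-$\le 2$ operator $A:=\epsilon\,\langle\,\cdot\,,p_0\rangle\,Tp_0+\langle\,\cdot\,,w_0\rangle\,f_0$ with a fixed $\epsilon\in(0,1)$. Since $\|Tp_0\|=\|f_0\|=1$, $Tp_0\perp f_0$, and $p_0\perp w_0$, a direct computation gives $\|Az\|^2=\epsilon^2|\langle z,p_0\rangle|^2+|\langle z,w_0\rangle|^2\le 1$ for $z\in S_{\mathbb H}$, with equality iff $z=\mu w_0$; hence $\|A\|=1$ and $M_A=\{\mu w_0:|\mu|=1\}$. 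Moreover $Aw_0=f_0$, so $\langle Aw_0,Tw_0\rangle=\langle f_0,Tw_0\rangle=0$, and since $M_A$ is a single line, Corollary \ref{cor; unique} gives $A\perp_\rho T$.

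It remains to check $T\not\perp_\rho A$, which is the heart of the argument. By Proposition \ref{ortho; rho} (equivalently Theorem \ref{cor;rho-symmetry}) this amounts to $\sup\Re\,W_T(A^{*}T)+\inf\Re\,W_T(A^{*}T)\neq 0$, where $W_T(A^{*}T)=\{\lim\langle Tx_n,Ax_n\rangle:\|Tx_n\|\to 1\}$. For a norming sequence $x_n=u_n+v_n$ ($u_n\in H_0$, $v_n\in H_0^{\perp}$), expanding $\langle Tx_n,Ax_n\rangle$ using $T(u_n+v_n)=Tu_n+Tv_n\in K\oplus K^{\perp}$, $Tv_n\perp Tp_0$, and $\langle Tu_n,Tp_0\rangle=\langle u_n,p_0\rangle$ yields $\langle Tx_n,Ax_n\rangle=\epsilon|\langle u_n,p_0\rangle|^2+R_n$ with $|R_n|\le 2|\langle v_n,w_0\rangle|$. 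The crucial point is that $\langle v_n,w_0\rangle\to 0$: from $\|Tx_n\|^2=\|u_n\|^2+\langle Bv_n,v_n\rangle\to 1=\|u_n\|^2+\|v_n\|^2$ one gets $\langle(I-B)v_n,v_n\rangle\to 0$, and since $I-B\ge\delta$ on $\mathcal E\ni w_0$, this forces the $\mathcal E$-component of $v_n$ (hence $\langle v_n,w_0\rangle$) to $0$. Consequently every subsequential limit of $\langle Tx_n,Ax_n\rangle$ lies in $[0,\epsilon]$, so $W_T(A^{*}T)\subseteq[0,\epsilon]\subseteq\mathbb R$; on the other hand the constant sequence $x_n\equiv p_0$ gives $\langle Tp_0,Ap_0\rangle=\epsilon$, so $\epsilon\in W_T(A^{*}T)$. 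Hence $\sup\Re\,W_T(A^{*}T)+\inf\Re\,W_T(A^{*}T)\ge\epsilon>0$, i.e.\ $T\not\perp_\rho A$, contradicting $\rho$-right symmetry of $T$. Therefore $T=0$.

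I expect the main obstacle to be precisely the selection of $w_0$ and the limit $\langle v_n,w_0\rangle\to 0$: the operator $B=(T|_{H_0^{\perp}})^{*}(T|_{H_0^{\perp}})$ need not attain its norm and may carry $1$ in its continuous spectrum, so one cannot simply take $w_0$ to be a unit eigenvector with small eigenvalue; the spectral-subspace choice is exactly what makes every norming sequence of $T$ asymptotically orthogonal to $w_0$, which kills the cross terms $R_n$ and pins $W_T(A^{*}T)$ to a half-line bounded away from the origin. A minor additional point is the degenerate case $H_0=\{0\}$ (i.e.\ $T$ not norm-attaining), which is outside this scheme and would need a separate limiting argument in the spirit of the diagonal-operator case.
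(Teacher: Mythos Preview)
Your argument is correct and, in one respect, more complete than the paper's own proof. Both proofs produce an operator $A$ whose norm-attainment set is a single line $\{\mu w_0:|\mu|=1\}\subset H_0^{\perp}$ with $\langle Aw_0,Tw_0\rangle=0$ (hence $A\perp_\rho T$ by Corollary~\ref{cor; unique}), and then argue $T\not\perp_\rho A$. The constructions differ: the paper sets $A|_{H_0}=\tfrac12\,T|_{H_0}$, picks an arbitrary basis vector $x_\beta\in H_0^{\perp}$, sends it to a unit vector $w_0\in T(H_0)^{\perp}\cap (Tx_\beta)^{\perp}$, and annihilates the rest of $H_0^{\perp}$; it then records $\langle Tx,Ax\rangle=\tfrac12$ for every $x\in M_T=S_{H_0}$ and concludes $T\not\perp_\rho A$ directly from this. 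Your $A$ is rank~$\le 2$, but the essential new idea is the spectral choice of $w_0\in E_B([0,1-\delta])(H_0^{\perp})$ for $B=(T|_{H_0^{\perp}})^{*}T|_{H_0^{\perp}}$: this forces $\langle v_n,w_0\rangle\to 0$ for \emph{every} norming sequence $x_n=u_n+v_n$ of $T$, which kills the cross terms and pins the full maximal numerical range $W_T(A^{*}T)$ inside $[0,\epsilon]$. The paper's verification, by contrast, evaluates $\langle Tx,Ax\rangle$ only on $M_T$; in an infinite-dimensional $\mathbb H$ where $T$ may admit norming sequences lying essentially in $H_0^{\perp}$, that step does not by itself identify $\rho'_{\pm}(T,A)$ (these are governed by all norming sequences via Theorem~\ref{cor;rho-symmetry}, not just by $M_T$). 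Your spectral selection is exactly the device that supplies this missing uniform control, at the small cost of a slightly longer set-up. Your closing caveat about $H_0=\{0\}$ is well-observed but not needed for the stated theorem, since $M_T=S_{H_0}$ together with (i) or (ii) tacitly entails $H_0\neq\{0\}$.
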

\begin{proof}
	Suppose that (i) holds true. Let us consider $\mathcal B=\{x_\alpha: \alpha\in \Lambda\}$ is an orthonormal basis for $H_0.$ Extend this basis to  $\mathcal{B}\cup\{x_\beta : \beta \in \Lambda'\}$ to get an orthonormal basis for $\mathbb{H}.$ Since $codim(H_0)\geq 2$, it follows that $codim(T(H_0))\geq 2.$ Therefore, there exist $w_0\in  S_\mathbb{H}\cap T(H_0)^{\perp}$ such that $w_0\perp Tx_\beta$ for some $\beta \in \Lambda'.$ Let us consider the following operator $A\in \Li(\mathbb{H})$:
	\begin{eqnarray*}
		Ax&=&\frac{1}{2} Tx \quad \forall\, x\in H_0,\\
		Ax_\beta&=& w_0,\\
		Ax_\gamma&=&0 \quad \forall\, \gamma\in \Lambda'\setminus \{\beta\}.
	\end{eqnarray*} 
	For any $z\in S_\mathbb{H},$ we can write $z= \sum c_{\alpha_i}x_{\alpha_i}+\sum c_{\beta_i}x_{\beta_i}$. Then one can see that $\|Az\|^2= |c_1|^2+ \frac{1}{4}\sum\|Tx_{\alpha_i}\|^2\leq 1.$ Therefore, $\|A\|=1$ and $M_A=\{\mu x_\beta: |\mu|=1\}.$ Now as $\langle Ax_\beta, Tx_\beta\rangle = 0,$ we have $A\perp_\rho T.$ On the other hand, for all $x\in M_T,$ we get $\langle Tx, Ax\rangle =\frac{1}{2},$ which implies $T\not\perp_\rho A.$\\
	For the condition (ii), we define $Ay = w_0$ and $Ah=\frac{1}{2}Th,$ where $h\in H_0$, $y\perp H_0$ and $w_0 \perp T(H_0).$  Now following similar argument as in (i), we can show that $T$ is not $\rho$-right symmetric.
\end{proof}
We now completely solve the problem of $\rho$-right symmetry in finite-dimensional case. To do so we need the following lemma.
\begin{lemma}\label{lem; diagonal}
	Let $\mathbb{H}$ be an $n$-dimensional Hilbert space with $\dim \mathbb{H}\geq 3$ and let $D\in \Li(\mathbb{H})$ be  diagonal. Then $D$ is $\rho$-right symmetric if and only if $D=0.$
\end{lemma}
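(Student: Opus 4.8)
The plan is to settle the nontrivial implication --- that a $\rho$-right symmetric diagonal operator must be zero --- by a case analysis on the shape of $M_D$, deferring all but one case to results already established and disposing of the remaining case by an explicit construction. The reverse implication requires nothing: if $D=0$ then $\rho'_{\pm}(0,A)=0$ for every $A$, so $0\perp_\rho A$ always holds and $D$ is $\rho$-right symmetric.

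Assume henceforth $D\neq 0$; after rescaling take $\|D\|=1$, and fix an orthonormal basis $\{e_1,\dots,e_n\}$ with $De_i=\lambda_ie_i$ and $\max_i|\lambda_i|=1$. Put $H_0:=\operatorname{span}\{e_i:|\lambda_i|=1\}$; since $\|Dz\|^2=\sum_i|\lambda_i|^2|z_i|^2$, one sees $M_D=S_{H_0}$ with $1\le\dim H_0\le n$. Now I would split according to $c:=\operatorname{codim}(H_0)$. If $c=0$, then $|\lambda_i|=1$ for every $i$, so $D$ is an isometry and Theorem \ref{right; isometry} (recall $\dim\mathbb H\ge 3$) shows $D$ is not $\rho$-right symmetric. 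If $c\ge 2$, Theorem \ref{th; right}(i) applies directly. If $c=1$, relabel so that $H_0^{\perp}=\operatorname{span}\{e_n\}$, with $0\le|\lambda_n|<1$ and $\dim H_0=n-1\ge 2$; when $\lambda_n=0$ we have $De_n=0$, and Theorem \ref{th; right}(ii) finishes this subcase.

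The one configuration left untreated is $c=1$ with $0<|\lambda_n|<1$, and this is the heart of the proof. Using that $\dim H_0\ge 2$ gives $e_1,e_2\in H_0$, I would define $A\in\Li(\mathbb H)$ by $Ae_1=\lambda_ne_n$, $Ae_n=\tfrac{\lambda_1}{|\lambda_n|}e_1$, and $Ae_i=\tfrac12 De_i$ for $2\le i\le n-1$. Because $\{De_i\}_i$ is an orthogonal family and $\tfrac1{|\lambda_n|}>1>\max\{|\lambda_n|,\tfrac12\}$, a short estimate of $\|Az\|^2$ for $z=\sum_ic_ie_i$ gives $\|A\|=\tfrac1{|\lambda_n|}$, attained exactly at unit scalar multiples of $e_n$; hence $M_A=\{\mu e_n:|\mu|=1\}$. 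As $\langle Ae_n,De_n\rangle=\langle\tfrac{\lambda_1}{|\lambda_n|}e_1,\lambda_ne_n\rangle=0$, we have $Ae_n\perp_\rho De_n$, so Corollary \ref{cor; unique} gives $A\perp_\rho D$. For the other direction, since $M_D=S_{H_0}$ and $\mathbb H$ is finite dimensional one has $W_D(A^*D)=\{\langle Dx,Ax\rangle:x\in M_D\}$; and for $x=\sum_{i=1}^{n-1}c_ie_i\in S_{H_0}$ only the coordinates $2\le i\le n-1$ contribute, giving (with $|\lambda_i|=1$ there) $\langle Dx,Ax\rangle=\tfrac12\sum_{i=2}^{n-1}|c_i|^2=\tfrac12(1-|c_1|^2)$. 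Thus $W_D(A^*D)=[0,\tfrac12]$, a subset of $\mathbb R$ that is not symmetric about the origin, so $D\not\perp_\rho A$ by Theorem \ref{cor;rho-symmetry}. Hence $D$ is not $\rho$-right symmetric, completing the proof.

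The step I expect to be the genuine obstacle is exactly this last subcase: Theorems \ref{th; right}(i)--(ii) need either a spare coordinate direction ($\operatorname{codim}(H_0)\ge 2$) or $D$ to annihilate $H_0^{\perp}$, and here $\dim H_0^{\perp}=1$ with $De_n\neq 0$, so neither is available. The device that rescues the argument is letting $A$ interchange $e_1$ and $e_n$ with unequal gains $|\lambda_n|$ and $1/|\lambda_n|$: this collapses $M_A$ onto the single direction $e_n$, making $Ae_n\perp De_n$ automatic (so $A\perp_\rho D$ comes for free via Corollary \ref{cor; unique}), while $A$ still acts as $\tfrac12 D$ on the remainder of $H_0$ --- which is precisely what forces $D\not\perp_\rho A$.
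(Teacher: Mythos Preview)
Your proof is correct, and the overall case analysis (isometry, $\operatorname{codim}(H_0)\ge 2$, $\operatorname{codim}(H_0)=1$ with $De_n=0$, and the remaining subcase) matches the paper exactly. The difference lies only in the handling of the last subcase $\operatorname{codim}(H_0)=1$, $\lambda_n\neq 0$: the paper builds a rank-one operator $A$ supported on $\frac{1}{\sqrt 2}(e_1+e_n)$ and then has to split further into $\Re(\lambda_1\overline{\lambda_n})\neq 0$ and $\Re(\lambda_1\overline{\lambda_n})=0$, adjusting the construction in each case. Your ``swap with unequal gains'' operator handles both at once, so you avoid that secondary case split; the price is that your $A$ has full rank (or rank $n-1$) rather than rank one, but the norm-attainment set is still a single direction, which is all that is needed for Corollary~\ref{cor; unique}. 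Both constructions exploit the same mechanism --- force $M_A$ to live where $\langle A\cdot,D\cdot\rangle$ vanishes, while on $M_D$ the real part of $\langle D\cdot,A\cdot\rangle$ is one-signed --- so the arguments are equivalent in spirit, with yours slightly more economical.
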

\begin{proof}
	Let $D$ be diagonal with respect to the standard ordered basis $\{e_1, e_2, \ldots, e_n\}.$ 
	If $M_D=S_\mathbb{H}$ then from Theorem \ref{right; isometry} we obtain that $D$ is $\rho$-right symmetric if and only if $D=0.$ So, let us assume that $M_D= S_{H_0}\subsetneq S_\mathbb{H}.$ Now if $codim (H_0)\geq 2$ then the result follows from Theorem \ref{th; right} (i). Suppose that $codim(H_0)=1$ and without loss of generality assume that $M_D=S_{H_0}=span\{e_1, e_2, \ldots, e_{n-1}\}\cap S_{\mathbb{H}}$. If $De_n=0,$  again from Theorem \ref{th; right} (ii) we are done. So only case is left to show that when $De_n\neq 0.$    Let $De_i=\lambda_i e_i,$ for $1\leq i\leq n.$ Clearly, $|\lambda_i|=1,$ for all $1\leq i\leq n-1$ and $0<|\lambda_n|<1.$ Note that there are two possible cases for the values of $\lambda_1$ and $\lambda_n.$ \\
	\textbf{Case-I:} Let $\Re (\lambda_1\overline{\lambda_n})\neq 0.$ We consider an operator $A\in \Li(\mathbb{H})$ defined as $$Az=\bigg\langle z, \frac{1}{\sqrt{2}}(e_1+e_n) \bigg\rangle (\lambda_n e_1-\lambda_1e_n). $$  It is clear to observe that $\|A\|=\|\lambda_n e_1-\lambda_1e_n\|$ and $M_A=\{\pm \frac{1}{\sqrt{2}}(e_1+e_n)\}.$ Note that $$\Re\bigg\langle A\frac{1}{\sqrt{2}}(e_1+e_n), D\frac{1}{\sqrt{2}}(e_1+e_n)\bigg\rangle = 0.$$ This implies $A\perp_\rho D.$ On the other hand, for any $z\in M_D,$ we can write $z=\sum_{i=1}^{n-1} \alpha_ie_i.$ In this case, $Az= \frac{1}{\sqrt{2}}\alpha_1(\lambda_n e_1-\lambda_1e_n)$ and therefore, $$\Re\langle Dz, Az\rangle = \Re \frac{{\alpha_1}^2}{\sqrt{2}}\lambda_1\overline{\lambda_n}\neq 0, \,\, \forall z\in M_D. $$ So, either $\Re\langle Dz, Az\rangle> 0$ or, $\Re\langle Dz, Az\rangle< 0$ for all $z\in M_D$ This shows that $D\not\perp_\rho A.$ \\
	\textbf{Case-II:} Let $\Re(\lambda_1\overline{\lambda_n})=0.$ This implies either (i)	$\Re \lambda_1=0$  and $\Im \lambda_n=0$ or,
(ii) $\Im \lambda_1=0 $ and $\Re \lambda_n=0.$ Without loss of generality assume that (i) holds true. Then we consider $A\in \Li(\mathbb{H})$ as 
	$$Az=\bigg\langle z, \frac{-i}{\sqrt{2}}e_1+ \frac{1}{\sqrt{2}}e_n \bigg\rangle \bigg(\frac{\overline{\lambda_n} e_1}{\sqrt{2}}+\frac{i\overline{\lambda_1}e_n}{\sqrt{2}}\bigg).$$ Note that  $\|A\|=\|\frac{\overline{\lambda_n} e_1}{\sqrt{2}}+\frac{i\overline{\lambda_1}e_n}{\sqrt{2}}\|$ and $M_A=\{\mu (\frac{-i}{\sqrt{2}}e_1+ \frac{1}{\sqrt{2}}e_n): |\mu|=1 \}.$ Now proceeding similarly as in Case-I, we can arrive at the conclusion that $A\perp_\rho D$ whereas $D\not\perp_\rho A.$	Hence we obtain $D$ must be the zero operator, which completes the proof.
	\end{proof}
Now we prove our desired result.
\begin{theorem}\label{right; main}
	Let $\dim \mathbb{H}=n\geq 3$ and let $T\in \Li(\mathbb{H}).$ Then $T$ is $\rho$-right symmetric if and only if $T=0.$
\end{theorem}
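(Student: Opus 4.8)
The plan is to reduce the statement to the diagonal case already settled in Lemma \ref{lem; diagonal}, by means of the singular value decomposition. The easy implication is that $T=0$ is $\rho$-right symmetric: indeed $\rho'(y,0)=0=\rho'(0,y)$ for every $y\in\Li(\mathbb{H})$, so $y\perp_\rho 0$ and $0\perp_\rho y$ hold trivially. It therefore remains to prove that $\rho$-right symmetry of $T$ forces $T=0$.

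First I would record a two-sided refinement of Lemma \ref{lem}. For unitaries $U_1,U_2\in\Li(\mathbb{H})$ and arbitrary $R,S\in\Li(\mathbb{H})$, a unitary is an isometry when applied on either side, so $\|U_1RU_2+tU_1SU_2\|=\|U_1(R+tS)U_2\|=\|R+tS\|$; hence $\rho'_{\pm}(U_1RU_2,U_1SU_2)=\rho'_{\pm}(R,S)$, and consequently
$$U_1RU_2\perp_\rho U_1SU_2\iff R\perp_\rho S .$$
Since $X\mapsto U_1XU_2$ is a bijection of $\Li(\mathbb{H})$, this yields that $U_1DU_2$ is $\rho$-right symmetric if and only if $D$ is: if $B\perp_\rho U_1DU_2$, write $B=U_1CU_2$ with $C=U_1^*BU_2^*$; then $C\perp_\rho D$, so $\rho$-right symmetry of $D$ gives $D\perp_\rho C$, i.e. $U_1DU_2\perp_\rho B$, and the reverse implication is identical.

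Next, since $\dim\mathbb{H}=n<\infty$, I would invoke the singular value decomposition to write $T=U_1DU_2$ with $U_1,U_2\in\Li(\mathbb{H})$ unitary and $D=\mathrm{diag}(s_1,\dots,s_n)$ the nonnegative diagonal operator whose entries are the singular values of $T$; note that $T=0\iff D=0$. By the previous step, $T$ is $\rho$-right symmetric exactly when the diagonal operator $D$ is. As $n\geq 3$, Lemma \ref{lem; diagonal} then forces $D=0$, hence $T=0$, which completes the proof.

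I do not expect a genuine obstacle at this stage: the substance of the argument — in particular the delicate phase computations of Cases I and II — is already contained in Lemma \ref{lem; diagonal}. The one point needing a little care is the two-sided unitary invariance above, since Lemma \ref{lem} only records the conjugation form $U^*TU\perp_\rho U^*AU$; but the same one-line norm computation covers it. One could equally bypass the SVD and argue by cases on $M_T=S_{H_0}$ (Theorem \ref{right; isometry} when $H_0=\mathbb{H}$; Theorem \ref{th; right}(i) when $\mathrm{codim}\,H_0\geq 2$; Theorem \ref{th; right}(ii) when $\mathrm{codim}\,H_0=1$ and $Tw_0=0$ for $w_0\perp H_0$), the only remaining case being $\mathrm{codim}\,H_0=1$ with $T$ injective, in which $T$ has singular values $1,\dots,1,s$ with $0<s<1$ and the reduction to Lemma \ref{lem; diagonal} again finishes the argument.
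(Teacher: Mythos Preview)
Your proof is correct and follows essentially the same route as the paper: reduce to the diagonal case of Lemma \ref{lem; diagonal} via a unitary decomposition together with the invariance of $\rho$-orthogonality under two-sided unitary multiplication. The paper reaches the diagonal operator in two steps (polar decomposition $S=U|S|$ plus spectral theorem for the positive part $|S|$, combining the left-multiplication invariance $T\perp_\rho A\iff UT\perp_\rho UA$ with the conjugation invariance of Lemma \ref{lem}), whereas your direct use of the SVD $T=U_1DU_2$ and the single two-sided invariance $U_1RU_2\perp_\rho U_1SU_2\iff R\perp_\rho S$ packages the same argument a little more economically.
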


\begin{proof}
  First of all let $T$ be a nonzero positive operator in $\Li(\mathbb{H}).$ Then by spectral theorem $T$ is unitariliy diagonalizable. Let $T=U^*DU,$ for some nonzero diagonal operator $D\in \Li(\mathbb{H})$ and $U\in \Li(\mathbb{H})$ is unitary. From Lemma \ref{lem; diagonal} we note that  there exists $A\in \Li(\mathbb{H})$ such that $A\perp_\rho D$ but $D\not\perp_\rho A.$ This implies from Lemma \ref{lem} that $U^*AU\perp_\rho T$ but $T\not\perp_\rho U^*AU.$ Hence $T$ is not $\rho$-right symmetric. Now for any $S\in \Li(\mathbb{H})$ we can write by polar decomposition that $S=U|S|$ or $S^*=V|S^*|,$  where $|S|=(S^*S)^{1/2},$ and  $U$ and $V$ can chosen as unitary (see \cite[Lemma 3.1]{Turnsek}). It is also easy to verify that for a unitary $U$, $T\perp_\rho A$ iff $UT\perp_\rho UA.$ Therefore, $|S|$ being nonzero positive operator, there exists $B \in \Li(\mathbb{H})$ such that $B\perp_\rho |S|$ but $|S|\not\perp_\rho B.$ This implies $UB\perp_\rho S$ but $S\not\perp_\rho UB.$ Also, since $S \perp_\rho A \iff S^*\perp_\rho A^*,$ it follows that the case $S^*=V|S^*|$ can be done by similar argument as before. Thus we arrive at our desired result. 
\end{proof}

We end this article with the following remark:
\begin{remark}
	We note from Corollary \ref{cor; sym} that in case of two-dimensional real Hilbert space, the shape of the numerical range of an operator is determined by the shape of its maximal numerical range. But if we consider $ \dim\mathbb{H}\geq 3,$ then this concept fails to hold, in general.  This leads to the following question:  \emph{Classify all the operators in $\Li(\mathbb{H})$ such that $\Re W_0(A)$ is symmetric with respect to the origin would imply $\Re W(A)$ is also symmetric with respect to the origin}. Equivalently, we seek to classify the following set:
	$$S:=\{A\in \Li(\mathbb{H}): A\perp_\rho I \implies I\perp_\rho A\}. $$ 
	\noindent Thus, the general theory of $\rho$-symmetry developed in this work not only extends classical results but also provides a unified and conceptually transparent framework for addressing this fundamental question concerning the geometric stability of numerical ranges in operator theory.
\end{remark}

\vspace{1cm}

\subsection*{Declarations}

\begin{itemize}
	
	\item Conflict of interest
	
	The authors have no relevant financial or non-financial interests to disclose.
	
	\item Data availability 
	
	The manuscript has no associated data.
	
	\item Author contribution
	
	All authors contributed to the study. All authors read and approved the final version of the manuscript.
	
\end{itemize}

\end{document}